\newcommand\R{\mathbb{R}}
\newcommand\Z{\mathbb{Z}}
\newcommand\N{\mathbb{N}}
\newcommand\T{\mathbb{T}}
\newcommand\cW{\mathcal{W}}
\newcommand\wh{\widehat}
\newcommand\ol{\overline}
\newcommand\supp{\operatorname{supp}}
\newcommand\diag{\operatorname{diag}}
\theoremstyle{plain}
\newtheorem*{thma}{Theorem~4$'$}
\newtheorem{thm}{Theorem}
\newtheorem{proposition}[thm]{Proposition}
\newtheorem{lem}{Lemma}
\newtheorem{cor}[thm]{Corollary}
\theoremstyle{remark}
\newtheorem{rem}{Remark}
\begin{document}

\title{Wiener's problem for positive definite functions}

\author{D.~V.~Gorbachev}
\address{D.~Gorbachev, Tula State University,
Department of Applied Mathematics and Computer Science,
300012 Tula, Russia}
\email{dvgmail@mail.ru}

\author{S.~Yu.~Tikhonov}
\address{S.~Tikhonov, ICREA, Centre de Recerca Matem\`{a}tica,
Campus de Bellaterra, Edifici~C 08193 Bellaterra (Barcelona), Spain, and UAB}
\email{stikhonov@crm.cat}

\keywords{positive definite function, Wiener's problem, Hlawka's inequality,
sharp constant, linear programming bound problem}

\thanks{D.~G. was supported by the RFBR (no.~16-01-00308), the Ministry of
Education and Science of the Russian Federation (no.~5414GZ), and by D.~Zimin's
Dynasty Foundation. S.~T. was partially supported by MTM 2014-59174-P, 2014 SGR
289, and by the Alexander von Humboldt Foundation.}

\begin{abstract}
We study the sharp constant $W_{n}(D)$ in Wiener's inequality for positive
definite functions
\[
\int_{\mathbb{T}^{n}}|f|^{2}\,dx\le W_{n}(D)|D|^{-1}\int_{D}|f|^{2}\,dx,\quad
D\subset \mathbb{T}^{n}.
\]
N. Wiener proved that $W_{1}([-\delta,\delta])<\infty$, $\delta\in (0,1/2)$.
E.~Hlawka showed that $W_{n}(D)\le 2^{n}$, where $D$ is an origin-symmetric
convex body.

We sharpen Hlawka's estimates for $D$ being the ball $B^{n}$ and the cube $I^{n}$.
In particular, we prove that $W_{n}(B^{n})\le 2^{(0.401\ldots +o(1))n}$. We also
obtain a lower bound of $W_{n}(D)$. Moreover, for a cube $ D=\frac1q I^{n}$ with
$q=3,4,\ldots,$ we obtain that $W_{n}(D)=2^{n}$. Our proofs are based on the
interrelation between Wiener's problem and the problems of Tur\'an and
Delsarte.
\end{abstract}
\maketitle

\section{Introduction}

Let $n\in \N$ and $\T^{n}=\R^{n}/\Z^{n}$. The Fourier series of a
complex-valued function $f\in L^{1}(\T^{n})$ is given by
\[
f(x)=\sum_{\nu\in \Z^{n}}\wh{f}_{\nu}e(\nu x),\qquad e(t)=e^{2\pi it},
\]
where
\[
\wh{f}_{\nu}=\int_{\T^{n}}f(x)e(-\nu x)\,dx,\quad \nu\in \Z^{n},
\]
are the Fourier coefficients of $f$. The support of a function $f$, written
$\supp f$, is the closure of the subset of $\T^{n}$ where $f$ is non-zero. Let
the unit ball and the unit cube be given by $B^{n}=\{x\in \R^{n}\colon |x|\le
1\}$ and $I^{n}=[-1,1]^{n}$, respectively. Let also $B_{r}:=B_{r}^{n}:=rB^{n}$
for $r>0$. By $|D|$ we denote the volume of $D\subset \R^{n}$. In what follows,
we assume that $D$ is an origin-symmetric convex body.

Wiener's inequality for positive definite functions in $L^{2}(\T^{n})$ is given
by
\begin{equation}\label{wiener-f}
\int_{\T^{n}}|f(x)|^{2}\,dx\le C_{n}(D)\int_{D}|f(x)|^{2}\,dx,\quad f\in
L_{+}^{1}(\T^{n}),
\end{equation}
where
\[
L_{+}^{1}(\T^{n}):=\bigl\{f\in L^{1}(\T^{n})\colon \wh{f}_{\nu}\ge 0\ \text{for
any}\ \nu\in \Z^{n}\bigr\},
\]
and $D\subset \T^{n}$. Here $C_{n}(D)$ is a positive constant depending only on
$n$ and~$D$. Note that $L_{+}^{1}(\T^{n})\nsubseteq L^{2}(\T^{n}),$ take, for
example,
\[
f(x)=\sum_{k=1}^\infty k^{-1/2}\cos{}(2\pi kx_{1}),
\]
see \cite[Ch.~V, (1.8)]{zy02}.

N.~Wiener (unpublished result, see e.g.~\cite{sh75}) proved in the early 1950's
that $C_{1}([-\delta,\delta])<\infty$ for $\delta\in (0,1/2)$.

For $n=1$, H.~Shapiro~\cite{sh75} showed that, for any $\delta\in (0,1/2)$,
\[
\int_{\T}|f|^{2}\,dx\le \delta^{-1}\int_{-\delta}^{\delta}|f|^{2}\,dx,\quad
f\in L_{+}^{1}(\T).
\]
The latter was generalized by E.~Hlawka \cite{hl81} for the multivariate case
as follows:
\begin{equation}\label{hlawka-ineq}
\int_{\T^{n}}|f|^{2}\,dx\le
\bigl|\tfrac{1}2D\bigr|^{-1}\int_{D}|f|^{2}\,dx,\quad f\in L_{+}^{1}(\T^{n}),
\end{equation}
where $D\subset \T^{n}$ is an origin-symmetric convex body.

The goal of this paper is to study the sharp constant
\[
W_{n}(D):=\sup_{f\in L_{+}^{1}(\T^{n})\setminus
\{0\}}\frac{\int_{\T^{n}}|f|^{2}\,dx}{|D|^{-1}\int_{D}|f|^{2}\,dx}.
\]
Note that Hlawka's result implies that
\begin{equation}\label{zv-2page}
W_{n}(D)\le 2^{n},\quad n\in \N.
\end{equation}
Moreover, taking $f=1$ we get a trivial estimate from below
\[
1\le W_{n}(D)
\]
for any $D\subset \T^{n}$.

Note that $f\in L_{+}^{1}(\T^{n})$ if and only if $f$ is positive definite
\cite[9.2.4]{ed79}. Recall that an integrable function $f$ is positive definite
\cite[Chap.~9]{ed79} if
\begin{equation}\label{u-cond}
\int_{\T^{n}}\int_{\T^{n}}f(x-y)u(x)\ol{u(y)}\,dx\,dy\ge 0
\end{equation}
for any $u\in C(\T^{n})$. It is sufficient to verify (\ref{u-cond}) only for
the case of $u$ being trigonometric polynomials.

For a continuous function $f\in C(\T^{n})$, condition \eqref{u-cond} is
equivalent to the fact that $f$ is positive definite in the classical sense,
that is, for every finite sequence $\{x_{i}\}_{i=1}^{N}$ in $\T^{n}$ and every
choice of complex numbers $\{c_{i}\}_{i=1}^{N}$, we have
\[
\sum_{i,j=1}^{N}c_{i}\ol{c_{j}}f(x_{i}-x_{j})\ge 0,
\]
see \cite[Chap.~1]{ru62}.

Note that if a function $f\in L_{+}^{1}(\T^{n})$ is bounded in some
neighborhood of the origin and therefore the series $\sum_{\nu}\wh{f}_{\nu}$
converges, then Bochner's theorem \cite[9.2.8]{ed79} implies that $f$ can be
viewed as a continuous positive definite function. In general, this is not the
case. However, the following result is true (see Section~\ref{sec-proofs} for
its proof).

\begin{proposition}\label{main0}
We have
\begin{equation}\label{W-F+}
W_{n}(D)=W_{n}^{+}(D):=\sup_{f\in
F_{+}\setminus
\{0\}}
\frac{\int_{\T^{n}}|f|^{2}\,dx}{|D|^{-1}\int_{D}|f|^{2}\,dx},
\end{equation}
where $F_{+}$ is a set of continuous positive definite functions on $\T^{n}$.
\end{proposition}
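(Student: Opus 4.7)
The inequality $W_n^+(D)\le W_n(D)$ is immediate, as every continuous positive definite function lies in $L_+^1(\T^n)$, so the supremum defining $W_n^+(D)$ is taken over a subfamily of the one defining $W_n(D)$. For the reverse $W_n(D)\le W_n^+(D)$, the plan is to approximate an arbitrary competitor $f\in L_+^1(\T^n)$ by continuous positive definite functions while preserving the ratio in the limit. Hlawka's bound \eqref{zv-2page} gives $W_n(D)\le 2^n<\infty$, so only $f\in L_+^1(\T^n)\cap L^2(\T^n)$ with $\int_D|f|^2>0$ can approach the supremum; I restrict attention to such $f$.

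The natural approximants are the Fej\'er means $g_N:=F_N^{(n)}\ast f$, where $F_N^{(n)}(x):=\prod_{j=1}^nF_N(x_j)$ is the tensor product of one-dimensional Fej\'er kernels of order $N$. Then $g_N$ is a trigonometric polynomial (hence continuous), and its Fourier coefficients
\[
\wh{g_N}_\nu=m_N(\nu)\,\wh f_\nu,\qquad m_N(\nu):=\prod_{j=1}^n(1-|\nu_j|/N)_+\in[0,1],
\]
are nonnegative as products of nonnegative numbers. Thus each $g_N$ belongs to $F_+$.

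Since $m_N(\nu)\nearrow 1$ pointwise, Parseval together with monotone convergence gives
\[
\|g_N\|_{L^2(\T^n)}^2=\sum_{\nu}m_N(\nu)^2\wh f_\nu^{\,2}\longrightarrow\sum_{\nu}\wh f_\nu^{\,2}=\|f\|_{L^2(\T^n)}^2.
\]
Because $f\in L^2(\T^n)$, the standard Fej\'er summability theorem yields $g_N\to f$ in $L^2(\T^n)$; in particular $\|g_N\|_{L^2(D)}^2\to\|f\|_{L^2(D)}^2$. Consequently the ratio for $g_N$ converges to the ratio for $f$, and taking the supremum over $f$ proves $W_n(D)\le W_n^+(D)$.

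The only delicate point is arranging for the approximants to satisfy simultaneously (i) membership in $F_+$ and (ii) convergence of both $L^2$ norms on $\T^n$ and on $D$. The Fej\'er choice handles both at once: nonnegativity of its Fourier multipliers puts $g_N$ in $F_+$, and its approximate-identity property delivers the required $L^2$ convergence. Had I worked instead with partial Fourier sums, (i) would remain but (ii) could fail in $L^2(D)$ due to Gibbs-type phenomena near $\partial D$, which is why a summability kernel rather than a truncation is used.
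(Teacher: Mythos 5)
Your proof is correct, and the overall strategy -- convolve $f$ with a continuous positive definite approximate identity so as to land in $F_{+}$, then show the Rayleigh quotient survives the limit -- is the same as the paper's. The execution differs in the choice of kernel and in how the denominator is controlled. The paper convolves with the periodization $\psi_{\varepsilon}$ of a compactly supported positive definite bump ($h_{*}=b^{-1}\chi_{K}*\chi_{K}$ rescaled), and since $\supp\psi_{\varepsilon}\subset B_{\varepsilon}$ it can bound $\|f_{\varepsilon}\|_{L^{2}(D)}^{2}$ by a local argument, namely the continuity of $y\mapsto(|f|^{2}*\chi_{D})(-y)$ at the origin; the numerator is then handled by Parseval together with Hlawka's inequality, which guarantees $f\in L^{2}(\T^{n})$. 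Your Fej\'er-kernel choice trades the compact support for the approximate-identity property in $L^{2}$: once Hlawka's inequality (which you also invoke, and which both arguments genuinely need to know that $\int_{D}|f|^{2}<\infty$ forces $f\in L^{2}(\T^{n})$) is in hand, plain $L^{2}(\T^{n})$ convergence of $g_{N}$ to $f$ gives convergence of both the $L^{2}(\T^{n})$ and the $L^{2}(D)$ norms at once, which streamlines the paper's two separate $(1+\varepsilon')$ and $(1-\varepsilon'')$ estimates into a single limit. One cosmetic remark: the relevant input is Hlawka's inequality \eqref{hlawka-ineq} itself (finiteness of $\int_{D}|f|^{2}$ implies $f\in L^{2}(\T^{n})$), rather than the derived bound $W_{n}(D)\le 2^{n}$ that you cite; and you should note, as you implicitly do, that for large $N$ one has $g_{N}\ne 0$ and $\int_{D}|g_{N}|^{2}>0$, so each $g_{N}$ is an admissible competitor for $W_{n}^{+}(D)$.
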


In this paper, we continue investigating multivariate inequality
\eqref{hlawka-ineq} and prove new bounds for $W_{n}(D)$. Moreover, we connect
this problem to Tur\'an's problem (see Section~\ref{sec-tur}) and Delsarte's
problem also known as the linear programming bound problem (see
Section~\ref{sec-del}).

The main results of the paper are new bounds of $ W_{n}(D)$ in the case when
$D$ is a ball or a cube.

\begin{thm}\label{main1}
For $\delta\in (0,1/2)$, we have
\[
W_{n}(\delta B^{n})\le 2^{(0.401\ldots +o(1))n}.
\]
\end{thm}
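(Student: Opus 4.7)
The plan is to reduce the estimate on $W_n(\delta B^n)$ to a Tur\'an-type extremal problem on the unit ball in $\R^n$ and then invoke a Delsarte-type linear-programming construction of a near-extremal test function.

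\textbf{Step 1 (Tur\'an reduction).} Let $h\in C(\R^n)$ be continuous positive definite with $h(0)=1$ and $\supp h\subset \delta B^n$. Since $\delta<1/2$, the set $\delta B^n$ fits inside a fundamental domain of $\T^n$, so $h$ descends to a continuous function on $\T^n$ whose Fourier coefficients are $\wh h_{\nu}=\wh h(\nu)\ge 0$. By Proposition~\ref{main0} we may restrict the supremum defining $W_n(\delta B^n)$ to continuous positive definite $f$, for which $\wh f_{\nu}\ge 0$. Parseval yields
\[
\int_{\T^n}h(x)|f(x)|^2\,dx=\sum_{\mu,\nu\in\Z^n}\wh f_{\mu}\,\wh f_{\nu}\,\wh h(\mu-\nu),
\]
every summand being nonnegative; the diagonal contribution $\mu=\nu$ alone already furnishes the lower bound $\wh h(0)\int_{\T^n}|f|^2\,dx$. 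Combined with the pointwise inequality $h\le \chi_{\delta B^n}$, this produces
\[
W_n(\delta B^n)\le \frac{|\delta B^n|}{\wh h(0)}=\frac{|\delta B^n|}{\int_{\R^n}h(x)\,dx}.
\]

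\textbf{Step 2 (Scaling to the unit ball).} The dilation $g(x)=h(\delta x)$ gives $\supp g\subset B^n$, $g(0)=1$, $\wh g\ge 0$, and $|B^n|/\int g=|\delta B^n|/\int h$. Hence it suffices to exhibit one continuous positive definite $g$ on $\R^n$, supported in $B^n$, with $g(0)=1$ and
\[
\int_{\R^n}g(x)\,dx\ge 2^{-(0.401\ldots+o(1))n}\,|B^n|.
\]

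\textbf{Step 3 (Delsarte construction).} This is the technical heart of the proof. Following the LP framework of Section~\ref{sec-del}, we construct $\wh g$ as a nonnegative radial kernel of Bessel/Jacobi-polynomial type, whose support structure forces $g$ to vanish outside $B^n$, and then asymptotically maximise $\int g=\wh g(0)$ by tuning a single dilation parameter according to the Kabatyanskii--Levenshtein recipe for spherical codes. This produces the desired exponent $0.401\ldots$, which is exactly the shift by the Kabatyanskii--Levenshtein improvement away from Hlawka's trivial exponent~$1$.

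The principal obstacle is Step~3: identifying the right radial profile, verifying the support condition $\supp g\subset B^n$ via large-order Bessel-zero asymptotics, and carrying out the asymptotic optimisation of the LP are all delicate. Steps~1 and~2, by contrast, are standard Hlawka-type manipulations — indeed, Hlawka's bound \eqref{zv-2page} is recovered by the suboptimal choice $h=\chi_{\delta B^n/2}*\chi_{\delta B^n/2}/|\delta B^n/2|$ in Step~1, which gives $|\delta B^n|/\wh h(0)=2^n$.
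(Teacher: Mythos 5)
Your Steps 1 and 2 are a correct reproduction of the Tur\'an bound of Theorem~\ref{th-turan}, but for the ball that route is provably a dead end, and your Step~3 asks for a function that does not exist. Indeed, Step~3 requires a positive definite $g$ with $g(0)=1$, $\supp g\subset B^{n}$ and $\int_{\R^{n}}g=\wh{g}(0)\ge 2^{-(0.401\ldots+o(1))n}|B^{n}|$. By Siegel's theorem \cite{si35} (quoted in Section~\ref{sec-tur}), the supremum of $\wh{g}(0)$ over exactly this class is $a_{\R^{n}}(B^{n})=\bigl|\tfrac12B^{n}\bigr|=2^{-n}|B^{n}|$, attained by $h_{*}=|\tfrac12B^{n}|^{-1}\chi_{B^{n}/2}*\chi_{B^{n}/2}$. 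So the Tur\'an reduction can never beat Hlawka's $2^{n}$ for the ball --- this is precisely the remark made in the paper right after Theorem~\ref{th-turan}, and it is the reason the paper introduces the Delsarte problem.

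The repair, and the paper's actual argument, is to weaken the admissibility condition from $\supp h\subset\delta B^{n}$ to the Delsarte sign condition $h|_{\R^{n}\setminus\delta B^{n}}\le 0$. Your Step~1 chain $\int_{D}|f|^{2}\ge\int_{\T^{n}}|f|^{2}h\ge\wh{h}(0)\int_{\T^{n}}|f|^{2}$ only uses $h\le\chi_{D}$ pointwise together with $\wh{h}\ge 0$, so it survives this relaxation; one must additionally check that the periodization of $h$ is still nonpositive off $D$ and satisfies $g(0)\le h(0)$, which is the content of Theorem~\ref{delll}, giving $W_{n}(\delta B^{n})\le|\delta B^{n}|A_{\R^{n}}(\delta B^{n})$. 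The scaling \eqref{A-hom} and the Cohn--Zhao bound $C_{A}\le C_{KL}$ of \eqref{2zv-5} then yield $2^{n}C_{KL}=2^{(0.401\ldots+o(1))n}$. Two further cautions about your Step~3 even after this fix: the Bessel/Jacobi-type radial profiles you describe are Delsarte-admissible (nonpositive outside the ball), not supported in the ball, so your phrase ``forces $g$ to vanish outside $B^{n}$'' conflates the two conditions; and such a single-profile construction at best recovers Levenshtein's $C_{L}=2^{-(0.5573\ldots+o(1))n}$, i.e.\ $W_{n}\le 2^{(0.442\ldots+o(1))n}$. Reaching the exponent $0.401\ldots$ requires the Cohn--Zhao comparison with the Kabatiansky--Levenshtein spherical-code bound \cite{cozh14}, a substantial published theorem that should be invoked as a black box rather than re-derived in a sketch.
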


\begin{thm}\label{main2}
Let $\delta\in (0,1/2)$. Then
\[
W_{n}(\delta I^{n})\le 2^{n}(1-\theta(\delta))^{n},
\]
where
\begin{itemize}
\item[(\textit{i})]\quad $\theta(\delta)\in C(0, 1/2)$, and, moreover,
$\theta(\delta)=O(\delta^{2})$ as $\delta\to 0$,
\item[(\textit{ii})]\quad $\theta(\delta)=0$ for $\delta^{-1}\in \N$,
\item[(\textit{iii})]\quad $0<\theta(\delta)<1$ for $\delta^{-1}\notin \N$.
\end{itemize}
\end{thm}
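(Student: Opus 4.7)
The plan is to relate $W_{n}(\delta I^{n})$ to a Tur\'an-type extremal problem via Fourier duality and then reduce to one dimension using the product structure of the cube. By Proposition~\ref{main0} it suffices to sup over continuous positive definite $f$, so $g:=|f|^{2}$ is continuous, non-negative, and positive definite. For any continuous positive definite $\psi$ with $\supp\psi\subset D$ and $\psi(0)=1$, the bound $|\psi|\le\psi(0)=1$ gives $\int_{\T^{n}}\psi g\le\int_{D}g$, while Parseval together with $\wh\psi_{\nu},\wh g_{\nu}\ge 0$ gives $\int_{\T^{n}}\psi g=\sum_{\nu}\wh\psi_{\nu}\wh g_{\nu}\ge\wh\psi_{0}\wh g_{0}$. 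Passing to the supremum over $\psi$ yields
\[
W_{n}(D)\le\frac{|D|}{T_{n}(D)},
\]
where $T_{n}(D):=\sup_{\psi}\int_{\T^{n}}\psi$ ranges over continuous positive definite $\psi$ with $\supp\psi\subset D$ and $\psi(0)=1$. For $D=\delta I^{n}$, tensor products of admissible one-dimensional $\psi_{0}$'s give $T_{n}(\delta I^{n})\ge T_{1}(\delta I)^{n}$. Setting $\theta(\delta):=1-\delta/T_{1}([-\delta,\delta])$ therefore yields $W_{n}(\delta I^{n})\le 2^{n}(1-\theta(\delta))^{n}$. Since the triangle $(1-|x|/\delta)_{+}$ is admissible with integral $\delta$, Hlawka's inequality gives $T_{1}\ge\delta$, so $\theta\ge 0$.

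For (ii), I prove $T_{1}(1/q)=1/q$ by periodization: given admissible $\psi$, set $\Psi(x):=\sum_{j=0}^{q-1}\psi(x-j/q)$, so that $\wh\Psi_{\nu}=q\wh\psi_{\nu}$ if $q\mid\nu$ and $0$ otherwise. Thus $\Psi$ is continuous positive definite of period $1/q$, and continuity of $\psi$ together with $\supp\psi\subset[-1/q,1/q]$ forces $\psi(\pm 1/q)=0$, so $\Psi(0)=\psi(0)=1$. Rescaling $\wt\Psi(y):=\Psi(y/q)$ produces a continuous pd function on $\T$ with $\wt\Psi(0)=1$ and integral $q\int\psi$; the trivial bound $\wh{h}_{0}\le h(0)$ yields $q\int\psi\le 1$, hence $T_{1}(1/q)=1/q$ and $\theta(1/q)=0$. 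For (iii), the bound $\theta<1$ is immediate from $T_{1}>0$. For $\theta(\delta)>0$ when $\delta^{-1}\notin\N$, I construct an admissible $\psi$ strictly beating the triangle: if $1/(q+1)<\delta<1/q$, the periodization of (ii) no longer tiles $\T$ exactly, and the slack between $(q+1)$ overlapping shifts of $[-\delta,\delta]$ can be converted into a Fej\'er-type construction---for instance a convex combination of dilated triangles of periods commensurate to $1/q$ and $1/(q+1)$---with non-negative Fourier coefficients and integral exceeding $\delta$.

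For (i), continuity of $\delta\mapsto T_{1}([-\delta,\delta])$ (and hence of $\theta$) follows from a dilation/approximation argument, since admissible $\psi$'s scale continuously in $\delta$. The asymptotic $\theta(\delta)=O(\delta^{2})$ as $\delta\to 0$ is equivalent to the upper bound $T_{1}([-\delta,\delta])\le\delta+O(\delta^{3})$, and this is the main obstacle: it requires a Delsarte-type dual certificate---an even trigonometric polynomial $p(x)=\sum_{|\nu|\le K}\wh p_{\nu}e(\nu x)$ with $\wh p_{\nu}\le 0$ for $\nu\ne 0$, a controlled pointwise lower bound for $p$ on $[-\delta,\delta]$, and $\wh p_{0}$ as small as possible. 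Pairing $\int p\,\psi=\sum_{\nu}\wh p_{\nu}\wh\psi_{\nu}$ against any admissible $\psi$ then forces $\wh\psi_{0}$ into the desired window; a polynomial quadratic in $\cos(2\pi x)$ should give the leading term, with the $\delta^{3}$ error coming from Taylor-expanding its minimum over $[-\delta,\delta]$.
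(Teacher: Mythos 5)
Your derivation of the main inequality coincides with the paper's: your $T_{n}(D)$ is exactly the periodic Tur\'an constant $a_{\T^{n}}(D)$, the duality step $W_{n}(D)\le |D|/a_{\T^{n}}(D)$ is Theorem~\ref{th-turan}, the tensorization $a_{\T^{n}}(\delta I^{n})\ge (a_{\T}([-\delta,\delta]))^{n}$ is the easy half of Lemma~\ref{ll}, and your $\theta$ is the paper's $\theta(\delta)=1-\delta a_{\T}^{-1}([-\delta,\delta])$. Where you diverge is in handling (i)--(iii): the paper simply quotes the known solution of the one-dimensional periodic Tur\'an problem, namely \eqref{ste-eq} and \eqref{a-prop}, whereas you attempt to prove these facts directly. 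Your periodization proof of (ii) is correct (it is essentially Stechkin's argument), so that part is a genuine self-contained addition.

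The remaining parts of (i) and (iii) are gaps. For (iii), a convex combination of triangles supported in $[-\delta,\delta]$ has zeroth Fourier coefficient equal to the same convex combination of the individual ones, hence at most $\delta$; no averaging of Fej\'er-type functions of this kind can exceed $\delta$, so the construction you sketch cannot certify $a_{\T}([-\delta,\delta])>\delta$. The strict inequality genuinely exploits periodicity and is a nontrivial theorem in the literature the paper cites. For the continuity in (i), dilation does not act on $1$-periodic functions (rescaling changes the period), so continuity of $\delta\mapsto a_{\T}([-\delta,\delta])$ does not follow from the scaling argument you indicate; in the paper it comes from Ivanov's explicit solution of the one-dimensional problem. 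Finally, $\theta(\delta)=O(\delta^{2})$ requires $a_{\T}([-\delta,\delta])\le\delta(1+O(\delta^{2}))$, which you explicitly leave open; note that the monotonicity bound $a_{\T}([-\delta,\delta])\le a_{\T}([-1/q,1/q])=1/q$ with $q=[\delta^{-1}]$ only yields an $O(\delta)$ error, and the correct route is the comparison $\lambda^{-1}a_{\T}(\lambda[-1,1])\le a_{\R}([-1,1])(1+O(\lambda^{2}))$ from \cite{go01} together with $a_{\R}([-1,1])=1$. As written, your proposal establishes the displayed inequality with the correct $\theta$ and property (ii), but not the remaining assertions of (i) and (iii).
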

Note that there is a specific expression for $\theta(\delta)$, which is
\[
\theta(\delta)=1-\delta a_{\T}^{-1}([-\delta,\delta]),
\]
where $a_{\T}([-\delta,\delta])$ is the solution of Tur\'an's problem, see
Section~\ref{sec-tur}.

Our next result provides an estimate of $ W_{n}(D)$ from below.

\begin{thm}\label{main3}
Let $D\subset \delta I^{n}$ with $\delta\in (0,1/q)$ for some $q=2,3,\dots$. Then
\[
|D|q^{n}\le W_{n}(D).
\]
\end{thm}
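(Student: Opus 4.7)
The plan is to exhibit, for any sufficiently small $\epsilon>0$, a continuous positive definite test function $f$ on $\T^n$ whose quotient $\int_{\T^n}|f|^2/(|D|^{-1}\int_D|f|^2)$ equals exactly $|D|q^n$; together with Proposition~\ref{main0} this will yield $W_n(D)\ge|D|q^n$. Take any smooth bump $\phi$ on $\T^n$ whose support, represented in the fundamental domain $[-1/2,1/2)^n$, lies in the Euclidean ball $B^n_{\epsilon/2}$, set $g=\phi*\wt\phi$ with $\wt\phi(x)=\ol{\phi(-x)}$, and define
\[
f(x):=g(qx).
\]
Then $\wh g_\nu=|\wh\phi_\nu|^2\ge 0$, and $\wh f_\nu=\wh g_{\nu/q}$ whenever $q\mid\nu$, $\wh f_\nu=0$ otherwise. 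Both $g$ and $f$ are thus continuous positive definite functions, and $\supp g\subset B^n_\epsilon$ inside the fundamental domain.

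Next I would compute the two $L^2$-integrals. By Parseval (equivalently, via the $q^n$-to-one covering $x\mapsto qx\pmod{\Z^n}$),
\[
\int_{\T^n}|f|^2\,dx=\int_{\T^n}|g|^2\,dx.
\]
The support of $f$ in $\T^n$ is contained in the union of $\epsilon/q$-balls about the $q$-torsion points $\tfrac1q\Z^n/\Z^n$. Since $D\subset\delta I^n$ with $\delta<1/q$, the only such torsion point in $D$ is the origin---every other has sup-norm at least $1/q>\delta$---so choosing $\epsilon<1-q\delta$ separates the $\epsilon/q$-balls around the remaining torsion points from $D$. Further shrinking of $\epsilon$ ensures $B^n_{\epsilon/q}\subset D$, which is possible since $0$ is interior to the convex body $D$. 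Under these choices,
\[
\int_D|f|^2\,dx=\int_{B^n_{\epsilon/q}}|g(qx)|^2\,dx=q^{-n}\int_{B^n_\epsilon}|g(y)|^2\,dy=q^{-n}\int_{\T^n}|g|^2\,dy,
\]
the last equality holding because $\supp g\subset B^n_\epsilon$. Taking the ratio gives the desired bound $W_n(D)\ge|D|q^n$.

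The only delicate point is arranging that $D\cap\supp f$ lies entirely in a small neighborhood of $0$; this reduces to the elementary inequality $\delta+\epsilon/q<1/q$, which is precisely the content of the hypothesis $\delta<1/q$. No matching upper bound is needed, so the proof should be quite short.
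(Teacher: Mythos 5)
Your proof is correct and is essentially the paper's argument: your $f(x)=g(qx)$ is, up to normalization and the particular choice of bump, the same test function as the paper's $|\Gamma|^{-1}\sum_{\gamma\in\Gamma}\psi_{\varepsilon}(x-\gamma)$ with $\Gamma=\tfrac1q\Z^{n}/\Z^{n}$ --- a positive definite function concentrated on the $q$-torsion points, of which only the bump at the origin meets $D$. The two $L^{2}$ computations and your separation condition $\epsilon<1-q\delta$ (the paper's $\varepsilon<1/q-\delta$ after rescaling) coincide with the paper's proof.
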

For the cube $D=\delta I^{n}$, this result gives
\begin{equation}\label{W-F--}
2^{n}(\delta q)^{n}\le W_{n}(\delta I^{n}),\quad \delta\in (0,1/q).
\end{equation}

Letting $\delta\to \frac1q-$, this and Hlawka's inequality \eqref{zv-2page}
give
\begin{cor}[Wiener's constant for the cube]
For $q=3,4,\ldots,$ we have
\[
W_{n}(\tfrac{1}{q}I^{n})=2^{n}.
\]
\end{cor}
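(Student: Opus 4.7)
The strategy is to prove matching upper and lower bounds for $W_n(\frac{1}{q}I^n)$. The upper bound $W_n(\frac{1}{q}I^n)\le 2^n$ is immediate from Hlawka's inequality \eqref{zv-2page}.

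For the lower bound, I apply \eqref{W-F--}: for every $\delta\in(0,1/q)$ one has $W_n(\delta I^n)\ge 2^n(\delta q)^n$, whose right-hand side tends to $2^n$ as $\delta\to 1/q^{-}$. To transfer this asymptotic bound to $\frac{1}{q}I^n$, I would use the monotonicity $W_n(\delta_1 I^n)\le W_n(\delta_2 I^n)$ for $\delta_1\le \delta_2$. Combined with \eqref{W-F--}, this gives
\[
W_n(\tfrac{1}{q}I^n)\ge \sup_{\delta<1/q}W_n(\delta I^n)\ge \sup_{\delta<1/q}2^n(\delta q)^n = 2^n,
\]
matching the Hlawka upper bound and yielding equality.

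The hard part will be justifying the monotonicity of $W_n(\delta I^n)$ in $\delta$. It does \emph{not} hold pointwise in $f$: the natural lattice-translate construction $f_\delta=\sum_{k\in\Z^n}h_\delta(\cdot-k/q)$ realizing \eqref{W-F--} on $\delta I^n$ gives only ratio $1$ on $\frac{1}{q}I^n$, because the shifted centers $\pm e_i/q$ then land on $\partial(\frac{1}{q}I^n)$ and contribute halves of their mass to $\int_D|f|^2$. The monotonicity must therefore be established at the supremum level, for instance by a limiting or compactness argument (using Proposition~\ref{main0} to reduce to continuous positive definite functions) that extracts, from near-extremals $f_\delta$ for $W_n(\delta I^n)$, a sequence in $L_+^1(\T^n)$ whose ratios approach $2^n$ on $\frac{1}{q}I^n$; alternatively, one may appeal to the continuity of the Tur\'an parameter $\theta(\delta)$ in Theorem~\ref{main2}, which, together with the lower bound \eqref{W-F--}, sandwiches $W_n(\delta I^n)$ between $2^n(\delta q)^n$ and $2^n(1-\theta(\delta))^n$, both tending to $2^n$ as $\delta\to 1/q$.
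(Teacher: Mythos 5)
Your upper bound is exactly the paper's (Hlawka's estimate \eqref{zv-2page}), and your plan for the lower bound is also the paper's: the Corollary is obtained there in one line by combining \eqref{W-F--} with \eqref{zv-2page} and ``letting $\delta\to\frac1q-$''. You have correctly located the point that this one-liner leaves implicit: the lattice function \eqref{fG-def} degenerates at $\delta=\frac1q$, because the bumps centred at the $3^{n}-1$ nonzero points of $\Gamma$ lying on $\partial(\frac1q I^{n})$ contribute fractions $2^{-k}$ of their mass to $\int_{D}|f|^{2}$ (where $k$ is the number of coordinates equal to $\pm\frac1q$), the total effective number of centres counted with these weights is $2^{n}$, and the Wiener ratio collapses to $1$.

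However, neither of your proposed repairs closes this gap. The monotonicity $W_{n}(\delta_{1}I^{n})\le W_{n}(\delta_{2}I^{n})$ for $\delta_{1}\le\delta_{2}$ is false: $W_{n}(\tfrac12 I^{n})=W_{n}(\T^{n})=1$, while $W_{n}(\delta I^{n})\ge 2^{n}(1-\delta)^{n}>1$ for every $\delta\in(0,\tfrac12)$; moreover, Theorem~\ref{main2}(iii) forces $W_{n}(\delta I^{n})<2^{n}$ for $\delta^{-1}\notin\N$, so $W_{n}(\delta I^{n})$ cannot increase through the claimed value $2^{n}$ at $\delta=\frac1q$. (What is monotone is the unnormalized constant $|D|^{-1}W_{n}(D)=\sup_{f}\|f\|_{L^{2}(\T^{n})}^{2}/\|f\|_{L^{2}(D)}^{2}$, but it is antitone in $D$ and yields only $W_{n}(\tfrac1qI^{n})\le(\delta q)^{-n}W_{n}(\delta I^{n})$, which is useless here.) The sandwich via $\theta$ is likewise insufficient: it proves $\lim_{\delta\to 1/q^{-}}W_{n}(\delta I^{n})=2^{n}$ but says nothing about the value at $\delta=\frac1q$ itself. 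Indeed, since $\delta\mapsto\int_{\delta I^{n}}|f|^{2}$ is continuous for each fixed $f$, the map $\delta\mapsto W_{n}(\delta I^{n})$ is a supremum of continuous functions of $\delta$ and hence only lower semicontinuous, which gives $W_{n}(\tfrac1qI^{n})\le\liminf_{\delta\to1/q}W_{n}(\delta I^{n})$ --- an inequality in the unhelpful direction. Finally, a compactness argument applied to the near-extremals of \eqref{W-F--} cannot succeed as described, because those near-extremals are precisely the lattice functions whose ratio on $\frac1qI^{n}$ you have already shown to collapse to $1$. So the lower bound $W_{n}(\tfrac1qI^{n})\ge 2^{n}$ still requires producing, for each $\eta>0$, a single $f\in L_{+}^{1}(\T^{n})$ with $\int_{\T^{n}}|f|^{2}\ge(2^{n}-\eta)\,\bigl|\tfrac1qI^{n}\bigr|^{-1}\int_{\frac1q I^{n}}|f|^{2}$, and your proposal does not supply one; this is the same step that the paper's own derivation passes over without justification.
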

It is worth mentioning that setting\footnote{As usual, by $[a]$ we denote the
integer part of $a\in \R$.} $q=[\delta^{-1}]$ for $\delta^{-1}\notin \N$ and
$q=[\delta^{-1}]-1$ for $\delta^{-1}\in \N$ in (\ref{W-F--}), we obtain
\[
2^{n}(1-\delta)^{n}\le W_{n}(\delta I^{n}),\quad \delta\in (0,1/2).
\]
In particular, this implies that the limit
\[
\lim_{\delta\to 0+}W_{n}(\delta I^{n})=2^{n}
\]
exists. It is not known if this limit exists for other $D$.

\section{Wiener's problem vs. Tur\'an's problem}\label{sec-tur}

The periodic Tur\'an problem in $L^{1}(\T^{n})$ for positive definite functions
consists of finding \cite{go01}
\[
a_{\T^{n}}(D)=\sup \wh{g}_{0},
\]
where supremum is taken over all functions $g\in L^{1}(\T^{n})$ such that
\begin{equation}\label{turan}
\wh{g}_{\nu}\ge 0,\quad \supp g\subset D,\quad g(0)=1.
\end{equation}

Similarly we introduce the non-periodic Tur\'an problem in $L^{1}(\R^{n})$ for
positive definite functions with compact support:
\[
a_{\R^{n}}(D)=\sup\wh{u}(0),
\]
where
\[
h\in L^{1}(\R^{n}),\quad \wh{h}\ge 0,\quad \supp h\subset D,\quad h(0)=1.
\]
Here $D$ is any subset of $\R^{n}$ and
\[
\wh{h}(\xi)=\int_{\R^{n}}h(x)e(-\xi x)\,dx,\quad \xi\in \R^{n},
\]
is the Fourier transform of $h$.

Note that Tur\'an's problem is closely related to Boas--Kac--Krein
representation theorem on convolution roots of positive definite functions with
compact support, which in turn has applications to geostatistical simulation,
crystallography, optics, and phase retrieval (see \cite{ehgnri04}) as well as
Fuglede's conjecture (see \cite{kore03}). Very recent results on the topic can
be found in \cite{kore06a}.

The periodic and spatial problems are connected as follows \cite{go01}:
\[
a_{\R^{n}}(D)\le \lambda^{-n}a_{\T^{n}}(\lambda D)\le
a_{\R^{n}}(D)(1+O(\lambda^{2})),\quad \lambda\in (0,1].
\]

The periodic Tur\'an problem was studied in one dimension in
\cite{goma04,ivgoru05,ivru05} and completely solved in \cite{iv06}. Since the
solution has a complicated form, we only highlight the following two facts. If
$q=2,3,\dots$, then
\begin{equation}\label{ste-eq}
a_{\T}([-1/q,1/q])=1/q
\end{equation}
(see also \cite{ste}).

If $\delta\in (0,1/2)$, then
\begin{equation}\label{a-prop}
\begin{aligned}
&a_{\T}([-\delta,\delta])>\delta,&&\delta^{-1}\notin \N,
\\
&a_{\T}([-\delta,\delta])=\delta(1+O(\delta^{2})),&&\delta\to 0.
\end{aligned}
\end{equation}

Now we are in a position to sharpen the known bounds of $W_{n}(D)$ from above,
cf.~\eqref{zv-2page}.
\begin{thm}\label{th-turan}
Let $D\subset \T^{n}$. We have
\[
|D|^{-1}W_{n}(D)\le (a_{\T^{n}}(D))^{-1}\le (a_{\R^{n}}(D))^{-1}
\le |\tfrac{1}{2}D|^{-1}.
\]
\end{thm}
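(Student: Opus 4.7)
The statement is a chain of three estimates; I would attack them from right to left. For the rightmost bound $a_{\R^{n}}(D)\ge |\tfrac12 D|$, I would exhibit the canonical admissible function
\[
h=|\tfrac12 D|^{-1}\bigl(\mathbf{1}_{\frac12 D}*\mathbf{1}_{\frac12 D}\bigr).
\]
Origin-symmetry and convexity of $D$ force $\tfrac12 D+\tfrac12 D=D$, so $\supp h\subset D$; $\wh h=|\tfrac12 D|^{-1}|\wh{\mathbf{1}_{\frac12 D}}|^{2}\ge 0$; and $h(0)=1$, yielding $\wh h(0)=\int h=|\tfrac12 D|$. The middle inequality $a_{\T^{n}}(D)\ge a_{\R^{n}}(D)$ follows by a direct lifting: since $D$ fits inside a fundamental domain of $\T^{n}$, any $h$ admissible for the non-periodic problem descends to $g\in L^{1}(\T^{n})$ with $\supp g\subset D$, $g(0)=1$, and $\wh g_{\nu}=\wh h(\nu)\ge 0$, so $\wh g_{0}=\wh h(0)$ is achieved in the periodic problem.

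The leftmost and substantive inequality reads $W_{n}(D)\le |D|/a_{\T^{n}}(D)$. By Proposition~\ref{main0} I may restrict the supremum defining $W_{n}(D)$ to continuous positive definite $f$. Fix such an $f$, and choose any $g$ satisfying the periodic Tur\'an constraints \eqref{turan}. The crucial auxiliary observation is that $|f|^{2}=f\ol f$ is again positive definite: indeed $\wh f_{\nu}\ge 0$ and $\wh{\ol f}_{\nu}=\wh f_{-\nu}\ge 0$, so
\[
\wh{|f|^{2}}_{\nu}=\bigl(\wh f*\wh{\ol f}\bigr)_{\nu}\ge 0.
\]
I then estimate $J:=\int_{\T^{n}}|f(x)|^{2}g(x)\,dx$ in two ways. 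Since $|g(x)|\le g(0)=1$ and $\supp g\subset D$, the obvious bound from above is $J\le \int_{D}|f|^{2}$. On the other hand, Parseval gives
\[
J=\sum_{\nu\in \Z^{n}}\wh{|f|^{2}}_{\nu}\wh g_{-\nu}\ge \wh{|f|^{2}}_{0}\wh g_{0}=\|f\|_{2}^{2}\,\wh g_{0},
\]
where all omitted summands are nonnegative. Combining the two and taking the supremum over admissible $g$ yields $\|f\|_{2}^{2}\le (a_{\T^{n}}(D))^{-1}\int_{D}|f|^{2}$, which after division by $|D|$ is the asserted bound.

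The only step requiring real care is the use of Parseval for the product $|f|^{2}g$: since $|f|^{2}$ is continuous and positive definite, Bochner's theorem forces $\wh{|f|^{2}}\in \ell^{1}$, so its Fourier series converges absolutely and termwise integration against $g\in L^{1}(\T^{n})$ is legal; alternatively, nonnegativity of every summand allows a direct Fubini argument on the formal double sum. The main conceptual obstacle is simply recognising that the Tur\'an extremizer is precisely the correct test function to multiply against $|f|^{2}$; once that guess is made, the entire argument is essentially two lines.
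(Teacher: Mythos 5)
Your proposal is correct and follows essentially the same route as the paper: the same convolution square $|\tfrac12 D|^{-1}\chi_{\frac12 D}*\chi_{\frac12 D}$ for the rightmost bound, the same periodization for the middle one, and the same two-sided estimate of $\int_{\T^{n}}|f|^{2}g\,dx$ for the leftmost. Your rearrangement of the key step (dropping all but the $\nu=0$ term of $\sum_{\nu}\wh{|f|^{2}}_{\nu}\wh{g}_{-\nu}$) retains exactly the same terms of the same double sum as the paper's bound $(\wh{fg})_{\nu}\ge \wh{g}_{0}\wh{f}_{\nu}$, so the difference is purely cosmetic.
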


\begin{proof}
First, we show that
\begin{equation}\label{W-a-ineq}
|D|^{-1}W_{n}(D)\le (a_{\T^{n}}(D))^{-1}.
\end{equation}

Let $g$ be an admissible function for the periodic Tur\'an problem, i.e., $g$
satisfies condition \eqref{turan}. Then since $g$ is positive definite, we have
$\supp g\subset D$ and $g(x)\le g(0)=1$, $x\in \T^{n}$. Hence for any $f\in
L_{+}^{1}(\T^{n})$ we get
\[
\int_{D}|f|^{2}\,dx\ge \int_{\T^{n}}|f|^{2}g\,dx.
\]
Note that $fg\in L^{1}(\T^{n})\cap L^{2}(\T^{n})$. Since both $f$ and $g$ have
nonnegative Fourier coefficients, we obtain that
\[
(\wh{fg})_{\nu}=\sum_{\mu}\wh{f}_{\nu-\mu}\wh{g}_{\mu}\ge
\wh{g}_{0}\wh{f}_{\nu}
\]
and
\begin{align}\notag
\int_{\T^{n}}|f|^{2}g\,dx
&=\int_{\T^{n}}\ol{f(x)}\sum_{\nu}(\wh{fg})_{\nu}e(\nu x)\,dx=
\sum_{\nu}(\wh{fg})_{\nu}\int_{\T^{n}}\ol{f(x)}e(\nu x)\,dx\\
\label{2zv}
&=\sum_{\nu}(\wh{fg})_{\nu}\ol{\wh{f}_{\nu}}\ge \wh{g}_{0}\sum_{\nu}|\wh{f}_{\nu}|^{2}=
\wh{g}_{0}\int_{\T^{n}}|f|^{2}\,dx.
\end{align}
Thus,
\begin{equation}\label{1zv}
\int_{\T^{n}}|f|^{2}\,dx\le (\wh{g}_{0})^{-1}\int_{D}|f|^{2}\,dx,
\end{equation}
which gives $|D|^{-1}W_{n}(D)\le (\wh{g}_{0})^{-1}$. Minimizing
$(\wh{g}_{0})^{-1}$, or equivalently maximizing~$\wh{g}_{0}$, we arrive at
\eqref{W-a-ineq}.

Secondly, we prove that $a_{\R^{n}}(D)\le a_{\T^{n}}(D)$. Let $h$ be any
admissible function in the spatial Tur\'an problem. Consider the periodic
function \cite[Chap.~7]{stwe71}
\[
g(x)=\sum_{\nu\in \Z^{n}}h(x+\nu).
\]
Then $\wh{g}_{\nu}=\wh{h}(\nu)\ge 0$, $\supp g\subset D$, and
$g(0)=\sum_{\nu\in \Z^{n}}h(\nu)=h(0)=1$. Therefore, $g$ is an admissible
function in the periodic Tur\'an problem and, moreover,
$\wh{h}(0)=\wh{g}_{0}\le a_{\T^{n}}(D)$, which implies $a_{\R^{n}}(D)\le
a_{\T^{n}}(D)$.

Third, to show that $(a_{\R^{n}}(D))^{-1}\le |K|^{-1},$ where
$K=\tfrac{1}{2}D$, we set
\begin{equation}\label{h*}
h_{*}=b^{-1}\chi_{K}*\chi_{K},
\end{equation}
where $b=(\chi_{K}*\chi_{K})(0)=|K|$. Then $\supp h_{*}\subset D$ and
$\wh{h}_{*}=b^{-1}\wh{\chi}_{K}^{2}\ge 0$. Moreover,
\begin{equation}\label{aR-chi}
a_{\R^{n}}(D)\ge \wh{h}_{*}(0)=|K|^{-1}\wh{\chi}_{K}^{2}(0)
=|K|^{-1}\Bigl(\int_K dx\Bigr)^{2}=\bigl|\tfrac{1}2D\bigr|,
\end{equation}
which completes the proof.
\end{proof}

There is a conjecture \cite{bike14} that
\[
a_{\R^{n}}(D)=\bigl|\tfrac{1}2D\bigr|
\]
and $h_{*}$ is an extremal function. This conjecture was proved only for the
ball and Voronoi polytopes of lattices. In the case of the ball this was first
done by Siegel \cite{si35} and later in \cite{go01,kore03,bike14}). For the
case of the Voronoi polytopes see \cite{arbe01,arbe02}. It is worth mentioning
that these results are also known for any rotation and scaling of $D$. This
follows from
\begin{rem}
Let $f$ be an admissible function in Tur\'an problem, $\rho\in SO(n)$,
$\lambda=\diag{}(\lambda_{1},\dots,\lambda_n)$, $\lambda_i> 0$, and
$g(x)=f(\lambda^{-1}\rho^{-1}x),$ $x\in \R^{n}$. Then $\supp g\subset
\rho\lambda D$ and $\widehat{g}(\xi)=\det \lambda\cdot
\widehat{f}(\lambda\rho^{-1}\xi)\ge 0,$ where $\xi\in \R^{n}$ and $\det
\lambda=\lambda_{1}\dots\lambda_n$.

We have that $g$ is a positive definitive function such that
$\widehat{g}(0)=\det \lambda\cdot \widehat{f}(0)$ and $g(0)=f(0)=1$. Thus, we
get
\[
a_{\R^{n}}(\lambda\rho D)= \det \lambda\cdot a_{\R^{n}}(D).
\]
\end{rem}

Theorem \ref{th-turan} does not provide an improvement of Hlawka's inequality
$W_{n}(D)\le 2^{n}$ in the case of the ball. In order to sharpen this bound we
will now consider a wider class of admissible functions $h$.

\section{Wiener's problem vs. Delsarte's problem}\label{sec-del}

By the Delsarte problem in $L^{1}(\R^{n})$ for positive definite functions we
mean the following question \cite{go00,coel03}:
\begin{equation}\label{dels}
A_{\R^{n}}(D)=\inf h(0),
\end{equation}
where
infimum is taken over all functions $h\in L^{1}(\R^{n})$ such that
\[
\wh{h}\ge 0,\quad h|_{\R^{n}\setminus D}\le 0,\quad \wh{h}(0)=1.
\]
Noting that $\wh{h}_{\lambda}(\xi)=\lambda^{-n}\wh{h}(\lambda^{-1}\xi)$, where
$h_{\lambda}(x)=h(\lambda x)$, we have
\begin{equation}\label{A-hom}
A_{\R^{n}}(\lambda D)=\lambda^{-n}A_{\R^{n}}(D),\quad \lambda>0.
\end{equation}

The importance of Delsarte's problem can be illustrated by the following
remarks. Let $D=2B^{n}$ and $\Delta_{n}$ be the center density of sphere
packings in~$\R^{n}$ (see the details in \cite{go00,coel03,co02}).

The best known density bounds are
\[
2^{-(1+o(1))n}\le \Delta_{n}\le 2^{-(0.5990\ldots +o(1))n}=:C_{KL}
\]
as $n\to \infty$. For the best known lower estimate see \cite{ve13}. The upper
bound was found by G.~Kabatiansky and V.~Levenshtein in \cite{kale78}.

In \cite{le79}, V.~Levenshtein proved that
\[
\Delta_{n}\le \frac{(q_{n/2}/4)^{n}}{\Gamma^{2}(n/2+1)}=2^{-(0.5573\ldots
+o(1))n}=:C_{L},
\]
where $q_{n/2}$ is the first positive zero of the Bessel function $J_{n/2} (t)$.

Later, D.~Gorbachev \cite{go00} and H.~Cohn and N.~Elkies \cite{co02,coel03}
proved the linear programming bounds
\[
\Delta_{n}\le |B^{n}|A_{\R^{n}}(2B^{n})=:C_{A}.
\]
In particular, this yields that
\[
C_{A}\le C_{L}.
\]
Moreover, for admissible functions $h$ for the Delsarte problem, that is
satisfying \eqref{dels}, and such that $\supp \wh{h}\subset \rho_{n}B^{n}$,
where $\rho_{n}=q_{n/2}/(2\pi)$, we get $C_{A}=C_{L}$.

Recently, H.~Cohn and Y.~Zhao \cite{cozh14} proved that
\begin{equation}\label{2zv-5}
C_{A}\le C_{KL}.
\end{equation}
In 2016, $C_{A}$ was calculated when $n=8$ and $n=24$ (see \cite{mar1} and
\cite{mar2} respectively), and moreover, $\Delta_{n}=C_{A}$ for such $n$.

Our main result in this section is a new bound of Wiener's constant.
\begin{thm}\label{delll}
For $D\subset \T^{n}$ we have
\[
|D|^{-1}W_{n}(D)\le A_{\R^{n}}(D)\le (a_{\R^{n}}(D))^{-1}.
\]
\end{thm}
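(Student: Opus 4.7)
The plan is to prove the two inequalities separately, both by mimicking the proof of Theorem~\ref{th-turan} with a Delsarte-admissible function in place of the Tur\'an-admissible one. For the first bound $|D|^{-1}W_{n}(D)\le A_{\R^{n}}(D)$, I take any $h$ admissible for the Delsarte problem, so $\wh{h}\ge 0$, $h|_{\R^{n}\setminus D}\le 0$, and $\wh{h}(0)=1$, and form the periodisation
\[
g(x):=\sum_{\nu\in \Z^{n}}h(x+\nu),\qquad x\in \T^{n}.
\]
Then $\wh{g}_{\nu}=\wh{h}(\nu)\ge 0$, and in particular $\wh{g}_{0}=\wh{h}(0)=1$. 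Since $\wh{h}\ge 0$, the function $h$ is (continuous) positive definite, so $|h(x)|\le h(0)$ on $\R^{n}$. Combined with $h\le 0$ outside $D$, this yields the pointwise estimate $g\le h(0)\chi_{D}$: for $x\in D$, the translates $x+\nu$ with $\nu\ne 0$ lie outside $D$ (since $D$ sits inside a fundamental domain of $\Z^{n}$), so those terms are nonpositive and $g(x)\le h(x)\le h(0)$; for $x\in \T^{n}\setminus D$, every translate is outside $D$ and $g(x)\le 0$.

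With $g$ so constructed, the calculation \eqref{2zv}, whose only inputs are the nonnegativity of $\wh{f}_{\nu}$ and $\wh{g}_{\nu}$, gives $\int_{\T^{n}}|f|^{2}g\,dx\ge \wh{g}_{0}\int_{\T^{n}}|f|^{2}\,dx=\int_{\T^{n}}|f|^{2}\,dx$. Combining this with $\int_{\T^{n}}|f|^{2}g\,dx\le h(0)\int_{D}|f|^{2}\,dx$ (which follows from $g\le h(0)\chi_{D}$) produces $\int_{\T^{n}}|f|^{2}\,dx\le h(0)\int_{D}|f|^{2}\,dx$, i.e., $|D|^{-1}W_{n}(D)\le h(0)$; taking the infimum over admissible $h$ delivers $|D|^{-1}W_{n}(D)\le A_{\R^{n}}(D)$. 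Continuity of $f$ is legitimised via Proposition~\ref{main0}.

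For the second inequality $A_{\R^{n}}(D)\le (a_{\R^{n}}(D))^{-1}$, no periodisation is needed: a Tur\'an-admissible function becomes Delsarte-admissible after a single rescaling. Indeed, if $u\in L^{1}(\R^{n})$ satisfies $\wh{u}\ge 0$, $\supp u\subset D$, and $u(0)=1$, then $u\le 0$ on $\R^{n}\setminus D$ trivially (it vanishes there), and $h:=u/\wh{u}(0)$ meets the Delsarte conditions with $h(0)=1/\wh{u}(0)$. Hence $A_{\R^{n}}(D)\le 1/\wh{u}(0)$, and the supremum over $u$ yields $A_{\R^{n}}(D)\le 1/a_{\R^{n}}(D)$. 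The main obstacle is the pointwise bound $g\le h(0)\chi_{D}$ in the first step: it requires both the Delsarte sign constraint outside $D$ and the positive-definiteness bound $|h(x)|\le h(0)$ inside $D$, with the latter, which uses $\wh{h}\ge 0$, being precisely the ingredient that lets us drop the Tur\'an support assumption and obtain a strictly sharper bound than Theorem~\ref{th-turan}.
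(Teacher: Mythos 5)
Your proposal is correct and follows essentially the same route as the paper: periodise the Delsarte-admissible $h$ to $g=\sum_\nu h(\cdot+\nu)$, use the sign condition outside $D$ together with $\wh g_0=\wh h(0)=1$ and the computation \eqref{2zv} to get $|D|^{-1}W_n(D)\le h(0)$, and obtain $A_{\R^n}(D)\le(a_{\R^n}(D))^{-1}$ by observing that Tur\'an-admissible functions are (after renormalising) Delsarte-admissible. The only cosmetic difference is that you bound $g\le h(0)\chi_D$ directly via $h(x)\le h(0)$, whereas the paper passes through $g(x)\le g(0)\le h(0)$; the substance is identical.
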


\begin{proof}
Let us first show that $A_{\R^{n}}(D)\le (a_{\R^{n}}(D))^{-1}$.
We have
\[
(A_{\R^{n}}(D))^{-1}=\sup \wh{h}(0),
\]
where $\wh{h}\ge 0$, $h|_{\R^{n}\setminus D}\le 0$, and $h(0)=1$. This problem
differs from the Tur\'an problem only by a less restrictive condition
$h|_{\R^{n}\setminus D}\le 0$ in place of $h|_{\R^{n}\setminus D}=0$. Hence,
$(A_{\R^{n}}(D))^{-1}\ge a_{\R^{n}}(D).$

It is enough to show that
\begin{equation}\label{W-A-bound}
|D|^{-1}W_{n}(D)\le A_{\R^{n}}(D).
\end{equation}
Let $h$ be an admissible function for the Delsarte problem. Consider
\[
g(x)=\sum_{\nu\in \Z^{n}}h(x+\nu),\quad x\in \T^{n}.
\]
This is a positive definite function on $\T^{n}$ and therefore $g(x)\le g(0)$
for any $x\in \T^{n}$. Since $D\subset \T^{n}$, we have $h(x+\nu)\le 0$ for
$x\in \T^{n}\setminus D$ and $h(\nu)\le 0$ for $\nu\ne 0$, $\nu\in \Z^{n}$.
Thus we have that $g|_{\T^{n}\setminus D}\le 0$ and $g(0)\le h(0)$.

Then, following the proof of Theorem~\ref{th-turan}, for any positive definite
function $f\in L^{1}(\T^{n})$ we obtain
\[
\int_{D}|f|^{2}\,dx\ge (g(0))^{-1}\int_{D}|f|^{2}g\,dx\ge
(g(0))^{-1}\int_{\T^{n}}|f|^{2}g\,dx.
\]
Hence, using \eqref{2zv}, for positive definitive functions $f$ and $g$, we
have
\[
\int_{\T^{n}}|f|^{2}g\,dx\ge \wh{g_0}\int_{\T^{n}}|f|^{2}\,dx
\]
and $\wh{g_0}=\wh{h}(0)=1$. Then
\[
\int_{D}|f|^{2}\,dx\ge
(g(0))^{-1}\int_{\T^{n}}|f|^{2}\,dx
\ge
(h(0))^{-1}\int_{\T^{n}}|f|^{2}\,dx,
\]
which implies $|D|^{-1}W_{n}(D)\le h(0)$.
Taking infimum over all $h$, we conclude the proof of \eqref{W-A-bound}.
\end{proof}

\section{Proofs of main results}\label{sec-proofs}

\begin{proof}[Proof of Proposition \ref{main0}]
First, since $F_+\subset L^1_+$, we always have $W_{n}^{+}(D) \le W_{n}(D)$.

To prove $W_{n}(D)\le W_{n}^{+}(D)$, let $f\in L_{+}^{1}(\T^{n})\cap L^{2}(D)$,
$f\ne 0$. We will show that to study the supremum in \eqref{W-F+} it is enough
to consider continuous functions $f$ satisfying $|f(0)|\le \|f\|_{L^{2}(D)}$.

Let us consider a non-negative continuous positive definite radial
function~$\varphi(x)$, $x\in \R^{n}$, such that $\supp \varphi\subset B^{n}$
and $\wh{\varphi}(0)=1$. For example, one can put
\[
\varphi(x)=h_{*}(x)/\wh{h}_{*}(0),
\]
where $h_{*}$ is given by \eqref{h*} with
$K=\tfrac{1}{2}B^{n}$.
Note that in this case, $h_{*}$ is radial since $\chi_{K}$ is radial.

For every small $\varepsilon>0$ such that $B_{\varepsilon}\subset D$ let us
define
\[
\varphi_{\varepsilon}(x)=\varepsilon^{-n}\varphi(\varepsilon^{-1}x).
\]
This function satisfies the following conditions
\begin{equation}\label{phi-cond}
\supp \varphi_{\varepsilon}\subset B_{\varepsilon},\qquad
\wh{\varphi_{\varepsilon}}(\xi)=\wh{\varphi}(\varepsilon \xi),\quad \xi\in
\R^{n},\qquad \wh{\varphi_{\varepsilon}}(0)=1.
\end{equation}
Now we set
\begin{equation}\label{psi-eps}
\psi_{\varepsilon}(x)=\sum_{\nu\in \Z^{n}}\varphi_{\varepsilon}(x+\nu), \quad x\in \T^{n}.
\end{equation}
Then, by \eqref{phi-cond}, this function satisfies the following conditions
\begin{itemize}
\item[(a)]\quad $\supp \psi_{\varepsilon}\subset B_{\varepsilon},$
\item[(b)]\quad $(\wh{\psi_{\varepsilon}})_\nu=\wh{\varphi_{\varepsilon}}(\nu)=\wh{\varphi}(\varepsilon
\nu),\quad \nu\in \Z^{n},$
\item[(c)]\quad $(\wh{\psi_{\varepsilon}})_0=1.$
\end{itemize}
Thus, $\psi_{\varepsilon}\in L_{+}^{1}(\T^{n})\cap C(\T^{n})$.

Define
\[
f_\varepsilon(x)=b(f*\psi_{\varepsilon})(x),\qquad x\in \T^{n},
\]
where
\[
\qquad
b^{-1}=\|\psi_{\varepsilon}\|_{L^{2}(\T^{n})}=\|\psi_{\varepsilon}\|_{L^{2}(B_{\varepsilon})}.
\]
Then we have that $(\wh{f}_{\varepsilon})_\nu=b\wh{f}_{\nu} (\wh{\psi_{\varepsilon}})_\nu
$.
Moreover, $f_{\varepsilon}\in F_{+}$, which follows from
 Young's inequality
\[
\|f_{\varepsilon}\|_{C(\T^{n})}\le
b\|f\|_{L^{2}(\T^{n})}\|\psi_{\varepsilon}\|_{L^{2}(\T^{n})}=\|f\|_{L^{2}(\T^{n})}.
\]
 Moreover,
\[
|f_{\varepsilon}(0)|=b\biggl|\int_{B_{\varepsilon}}f\psi_{\varepsilon}\,dx\biggr|\le
b\|f\|_{L^{2}(B_{\varepsilon})}\|\psi_{\varepsilon}\|_{L^{2}(B_{\varepsilon})}\le
\|f\|_{L^{2}(D)}.
\]

The function $\psi_{\varepsilon}$ is non-negative such that its mean value is
equal to $1$. Then by H\"{o}lder's inequality we get that
\[
\biggl|\int_{B_{\varepsilon}}f(x-y)\psi_{\varepsilon}(y)\,dy\biggr|^{2}\le
\int_{B_{\varepsilon}}|f(x-y)|^{2}\psi_{\varepsilon}(y)\,dy
\]
for any fixed $x$. Then it follows that
\[
\|f_{\varepsilon}\|_{L^{2}(D)}^{2}=
b^{2}\int_{D}\biggl|\int_{B_{\varepsilon}}f(x-y)\psi_{\varepsilon}(y)\,dy\biggr|^{2}\,dx\le
b^{2}\int_{B_{\varepsilon}}\psi_{\varepsilon}(y)g(y)\,dy,
\]
where
\[
g(y)=\int_{D}|f(x-y)|^{2}\,dx=(|f|^{2}*\chi_{D})(-y).
\]
The function $g$ is continuous in a neighborhood of the origin, since $|f|^{2}\in
L^{1}(\T^{n})$. Therefore, for any $\varepsilon'>0$ there exists
$\varepsilon>0$ such that
\[
|g(y)|\le (1+\varepsilon')g(0)=(1+\varepsilon')\|f\|_{L^{2}(D)}^{2},\quad
|y|<\varepsilon,
\]
which gives
\[
\|f_{\varepsilon}\|_{L^{2}(D)}^{2}\le b^{2}(1+\varepsilon')\|f\|_{L^{2}(D)}^{2}.
\]
By Parseval's identity,
\[
\|f_{\varepsilon}\|_{L^{2}(\T^{n})}^{2}=
b^{2}\sum_{\nu}(\wh{f}_{\nu})^{2}(\wh{\varphi}(\varepsilon \nu))^{2}.
\]
Moreover, by Hlawka's inequality \eqref{hlawka-ineq} (or by Theorem
\ref{th-turan}), we have
\[
\sum_{\nu}(\wh{f}_{\nu})^{2}=
\|f\|_{L^{2}(\T^{n})}^{2}\le \bigl|\tfrac{1}2D\bigr|^{-1} \|f\|_{L^{2}(D)}^{2}.
\]
Using $\wh{\varphi}^{2}(\varepsilon \nu)\le \wh{\varphi}^{2}(0) = 1$, we have
that $$\sum_{\nu}(\wh{f}_{\nu})^{2}(\wh{\varphi}(\varepsilon \nu))^{2}\to
\|f\|_{L^{2}(\T^{n})}^{2}$$ uniformly as $\varepsilon\to 0$. Hence for any
small $\varepsilon''>0$ we can find $\varepsilon>0$ such that
\[
\|f_{\varepsilon}\|_{L^{2}(\T^{n})}^{2}\ge b^{2}(1-\varepsilon'')\|f\|_{L^{2}(\T^{n})}^{2}.
\]
Finally,
\[
\frac{\|f\|_{L^{2}(\T^{n})}^{2}}{|D|^{-1}\|f\|_{L^{2}(D)}^{2}}\le
\frac{b^{-2}(1-\varepsilon'')^{-1}\|f_{\varepsilon}\|_{L^{2}(\T^{n})}^{2}}
{|D|^{-1}b^{-2}(1+\varepsilon')^{-1}\|f_{\varepsilon}\|_{L^{2}(D)}^{2}}\le
\frac{1+\varepsilon'}{1-\varepsilon''}\,W_{n}^{+}(D).
\]
Letting $\varepsilon',\varepsilon''\to 0$ gives the required result.
\end{proof}

\begin{proof}[Proof of Theorem \ref{main1}]
Combining property \eqref{A-hom}, Cohn--Zhao's estimate \eqref{2zv-5}, and
Theorem \ref{delll}, we arrive at
\begin{align*}
W_{n}(\delta B^{n})\le |\delta B^{n}|A_{\R^{n}}(\delta B^{n})&=
2^{n}|B^{n}|A_{\R^{n}}(2B^{n})\\
&=2^{n}C_{A}\le 2^{n}C_{KL},
\end{align*}
where $2^{n}C_{KL}=2^{(0.401\ldots+o(1))n}$.
\end{proof}

To prove Theorem \ref{main2}, we need the following

\begin{lem}\label{ll}
Let $\T^{n}=\T^{n_{1}}\times \T^{n_{2}}$, $D_{i}\subset \T^{n_{i}}$, $i=1,2$.
Then
\[
a_{\T^{n}}(D_{1}\times D_{2})=a_{\T^{n_{1}}}(D_{1})a_{\T^{n_{2}}}(D_{2}).
\]
\end{lem}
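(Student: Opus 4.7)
The plan is to prove the equality by establishing the two matching inequalities separately. The lower bound $a_{\T^{n}}(D_{1}\times D_{2})\ge a_{\T^{n_{1}}}(D_{1})\,a_{\T^{n_{2}}}(D_{2})$ is the easy direction: given admissible functions $g_{i}$ for the Tur\'an problem on $D_{i}$, $i=1,2$, I would form the tensor product $g(x_{1},x_{2}):=g_{1}(x_{1})g_{2}(x_{2})$ and verify directly that $\wh{g}_{\nu_{1},\nu_{2}}=(\wh{g_{1}})_{\nu_{1}}(\wh{g_{2}})_{\nu_{2}}\ge 0$, $\supp g\subset D_{1}\times D_{2}$, $g(0)=1$, and $\wh{g}_{0}=(\wh{g_{1}})_{0}(\wh{g_{2}})_{0}$. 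Taking the supremum over $g_{1}$ and $g_{2}$ closes this side.

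For the reverse inequality I would start from an admissible $g$ on $D_{1}\times D_{2}$, assumed continuous (this can be arranged by the mollification trick used in the proof of Proposition \ref{main0}: convolve with a nonnegative positive definite $\psi_{\varepsilon}$ supported near the origin; for a continuous positive definite function the Fourier series is absolutely convergent, so termwise operations are legitimate). From $g$ I extract two admissible one-factor functions. First, the slice $g_{1}(x_{1}):=g(x_{1},0)$: since $(0,0)\in \supp g\subset D_{1}\times D_{2}$ forces $0\in D_{2}$, the product structure of $D_{1}\times D_{2}$ gives $g_{1}(x_{1})=0$ for $x_{1}\notin D_{1}$, while termwise summation in the absolutely convergent Fourier series yields $(\wh{g_{1}})_{\nu_{1}}=\sum_{\nu_{2}}\wh{g}_{\nu_{1},\nu_{2}}\ge 0$ and $g_{1}(0)=1$, so $g_{1}$ is admissible on $D_{1}$. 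Second, the marginal $G_{2}(x_{2}):=\int_{\T^{n_{1}}}g(x_{1},x_{2})\,dx_{1}$: by Fubini, $(\wh{G_{2}})_{\nu_{2}}=\wh{g}_{0,\nu_{2}}\ge 0$, the support condition gives $G_{2}(x_{2})=0$ for $x_{2}\notin D_{2}$, and $G_{2}(0)=\int_{\T^{n_{1}}}g_{1}(x_{1})\,dx_{1}=(\wh{g_{1}})_{0}$. After normalizing to $g_{2}:=G_{2}/(\wh{g_{1}})_{0}$, the function $g_{2}$ is admissible on $D_{2}$ and satisfies $(\wh{g_{2}})_{0}=\wh{g}_{0}/(\wh{g_{1}})_{0}$. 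Chaining the two Tur\'an bounds,
\[
\wh{g}_{0}=(\wh{g_{1}})_{0}\,(\wh{g_{2}})_{0}\le a_{\T^{n_{1}}}(D_{1})\,a_{\T^{n_{2}}}(D_{2}),
\]
and taking the supremum over $g$ yields the upper bound.

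The step I expect to require the most care is the reduction to continuous $g$ together with the pointwise slice $g(x_{1},0)$: for a bare $L^{1}$ function this slice is meaningless, so the mollification (and the accompanying verification that it preserves admissibility up to a vanishing error) must be in place before any of the above manipulations. A secondary technicality is the degenerate case $(\wh{g_{1}})_{0}=0$; since $(\wh{g_{1}})_{0}=\sum_{\nu_{2}}\wh{g}_{0,\nu_{2}}$ is a sum of nonnegative numbers containing the term $\wh{g}_{0,0}$, this forces $\wh{g}_{0}=0$ as well, and the target inequality becomes trivial in that case.
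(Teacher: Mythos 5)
Your proposal is correct and follows essentially the same route as the paper: the tensor product $g_{1}(x_{1})g_{2}(x_{2})$ for the lower bound, and the slice $g(x_{1},0)$ together with the normalized marginal $b^{-1}\int_{\T^{n_{1}}}g(x_{1},x_{2})\,dx_{1}$ for the upper bound, giving $\wh{g}_{0}=(\wh{g_{1}})_{0}(\wh{g_{2}})_{0}$. The extra care you take over continuity of $g$ and the degenerate case $(\wh{g_{1}})_{0}=0$ is sound (and slightly more thorough than the paper, which simply asserts $b>0$), but it does not change the argument.
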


We note that for the spatial Tur\'an problem a similar fact is known, see
\cite{arbe02}. To make the paper self-contained we prove Lemma~\ref{ll} using
the same idea as in \cite{arbe02}.

\begin{proof}[Proof of Lemma \ref{ll}]
Let $D=D_{1}\times D_{2}$, $x=(x_{1},x_{2})\in \T^{n}$, where $x_{i}\subset
\T^{n_{i}}$, $i=1,2$.

We start by assuming $f_{i}(x_{i})$ to be admissible functions in the problem $a_{\T^{n_{i}}}(D_{i})$.
Then the function $f(x)=f_{1}(x_{1})f_{2}(x_{2})$ is also an admissible function in the problem
$a_{\T^{n}}(D)$, since $f(0)=1$, $f(x)=0$ for $x_{i}\notin D_{i}$, and
$\wh{f}_\nu=(\wh{f_{1}})_{\nu_{1}}(\wh{f_{2}})_{\nu_{2}}\ge 0$,
$\nu=(\nu_{1},\nu_{2})\in \Z^{n}$.

Therefore, we have $$a_{\T^{n}}(D)\ge \wh{f}_0=(\wh{f_{1}})_0(\wh{f_{2}})_0,$$
which gives that
\[
a_{\T^{n}}(D)\ge
a_{\T^{n_{1}}}(D_{1})a_{\T^{n_{2}}}(D_{2}).
\]
On the other hand, if $f(x)$ is an admissible function in the problem
$a_{\T^{n}}(D)$, then we define $f_{1}(x_{1})=f(x_{1},0)$ and
$$f_{2}(x_{2})=b^{-1}\int_{\T^{n_{1}}}f(x_{1},x_{2})\,dx_{1},\qquad
b=\int_{\T^{n_{1}}}f(x_{1},0)\,dx_{1}.$$ Let us show that the functions
$f_{i}(x_{i})$ are admissible in the problems $a_{\T^{n_{i}}}(D_{i})$.

First, we have that $\supp f_{i}\subset D_{i}$, $f_{1}(0)=f(0)=1$,
$f_{2}(0)=1$. The function~$f_{1}$ is positive definite since $f$ is positive
definite and
\[
f_{1}(x_{1})=f(x_{1},0)=\sum_{\nu_{1}\in
\Z^{n_{1}}}
(\wh{f_{1}})_{\nu_{1}}e(\nu_{1}x_{1}),
\]
where
\[
0\le (\wh{f_{1}})_{\nu_{1}}=\sum_{\nu_{2}\in
\Z^{n_{2}}}\wh{f}_{\nu_{1},\nu_{2}}\le \sum_{\nu\in \Z^{n}}\wh{f}_{\nu}=1.
\]
The function $f_{2}$ is positive definite since $b=(\wh{f_{1}})_0>0$ and
$(\wh{f_{2}})_{\nu_{2}}=b^{-1}\wh{f}_{0,\nu_{2}}\ge 0$.

We have
\[
\wh{f}_0=b(\wh{f_{2}})_0=(\wh{f_{1}})_0 (\wh{f_{2}})_0\le
a_{\T^{n_{1}}}(D_{1})a_{\T^{n_{2}}}(D_{2}).
\]
Thus
\[
a_{\T^{n}}(D)\le a_{\T^{n_{1}}}(D_{1})a_{\T^{n_{2}}}(D_{2}).
\]
\end{proof}

\begin{proof}[Proof of Theorem \ref{main2}]
To show the estimate of $W_{n}(D)$ from above we use Lemma \ref{ll} to get
\begin{equation}\label{an-a1}
a_{\T^{n}}([-\delta,\delta]^{n})=(a_{\T}([-\delta,\delta]))^{n},\quad
\delta\in (0,1/2).
\end{equation}

Let $\delta\in (0,1/2)$ and $\theta(\delta)=1-\delta
a_{\T}^{-1}([-\delta,\delta])$. Using Theorem \ref{th-turan}, and property
\eqref{an-a1}, we get
\begin{align*}
W_{n}(D)\le |D|(a_{\T}([-\delta,\delta]))^{-n}&=2^{n}(\delta
a_{\T}^{-1}([-\delta,\delta]))^{n}\\ &=2^{n}(1-\theta(\delta))^{n},
\end{align*}
completing the proof.
\end{proof}

\begin{proof}[Proof of Theorem \ref{main3}]
Let $D\subset \delta I^{n}$, $\delta\in (0,1/q)$, $q=2,3,\dots$,
$\Z_{q}=\Z/q\Z$, and $\Gamma=\{\nu/q\colon \nu \in \Z_{q}^{n}\}\subset \T^{n}$.
We have $|\Gamma|=q^{n}$ and
\[
s_{\nu}:=|\Gamma|^{-1}\sum_{\gamma\in \Gamma}e(\nu
\gamma)=\prod_{i=1}^{n}q^{-1}\sum\limits_{k=0}^{q-1}e\Bigl(\frac{\nu_i
k}{q}\Bigr)\in\{0,1\},
\]
where $\nu=(\nu_{1},\dots,\nu_{n})\in \Z^{n}$.

Let $0<\varepsilon<\min\{\delta,1/q-\delta\}$. Taking into account that
coordinates of lattice points $\Gamma$ are multiple of $1/q$, and
$\varepsilon<1/(2q)$, we have that
\[
(B_{\varepsilon}+\Gamma)\cap D=B_{\varepsilon}.
\]
Moreover, for any $\gamma, \gamma'\in \Gamma$ the sets $B_{\varepsilon}+\gamma$
and $B_{\varepsilon}+\gamma'$ are disjoint.

Now we will use the positive definite function $\psi_{\varepsilon}$ given by
\eqref{psi-eps} and satisfying $\supp \psi_{\varepsilon}\subset
B_{\varepsilon}$. We define
\begin{equation}\label{fG-def}
f(x)=|\Gamma|^{-1}\sum_{\gamma\in \Gamma}\psi_{\varepsilon}(x-\gamma).
\end{equation}
Then $f$ is a periodic function supported on $B_{\varepsilon}+\Gamma$ and such
that
\[
\wh{f}_{\nu}=(\wh{\psi_{\varepsilon}})_\nu s_{-\nu}\ge 0,\quad \nu\in \Z^{n}.
\]
This gives the positive definiteness of $f$.

Now we note that supports of the functions $\psi_{\varepsilon}(x-\gamma)$,
which are equal to $B_{\varepsilon}+\gamma$, are disjoint. Using this, we
obtain
\begin{equation}\label{f2-sum}
f^{2}(x)=|\Gamma|^{-2}\sum_{\gamma\in
\Gamma}\psi_{\varepsilon}^{2}(x-\gamma),\quad x\in \T^{n}.
\end{equation}
Integrating this and taking into account that
\[
\int_{\T^{n}}\psi_{\varepsilon}^{2}(x-\gamma)\,dx=
\int_{\T^{n}}\psi_{\varepsilon}^{2}(x)\,dx=
\|\psi_{\varepsilon}\|_{L^{2}(B_{\varepsilon})}^{2},
\]
we get
\begin{equation}\label{f2-int-T}
\int_{\T^{n}}f^{2}\,dx=|\Gamma|^{-2}\sum_{\gamma\in
\Gamma}\int_{\T^{n}}\psi_{\varepsilon}^{2}(x-\gamma)\,dx=
|\Gamma|^{-1}\|\psi_{\varepsilon}\|_{L^{2}(B_{\varepsilon})}^{2}.
\end{equation}
In light of $(B_{\varepsilon}+\Gamma)\cap D=B_{\varepsilon}$ we have
\begin{equation}\label{f2-int-D}
\int_{D}f^{2}\,dx=|\Gamma|^{-2}\int_{B_{\varepsilon}}\psi_{\varepsilon}^{2}\,dx=
|\Gamma|^{-2}\|\psi_{\varepsilon}\|_{L^{2}(B_{\varepsilon})}^{2}.
\end{equation}
Thus,
\begin{equation}\label{W-lower}
W_{n}(D)\ge
\frac{\int_{\T^{n}}f^{2}\,dx}{|D|^{-1}\int_{D}f^{2}\,dx}=|D||\Gamma|=|D|q^{n}.
\end{equation}
\end{proof}

\section{Wiener's inequality in $\R^{n}$ }

Let $f\colon \R^{n}\to \mathbb{C}$ be a positive definite function in the
following sense: $f$ is integrable and such that $\wh{f}\ge 0$. Then a formal
analogue of Wiener's inequality \eqref{wiener-f} given by
\[
\int_{\R^{n}}|f|^{2}\,dx\le C_{n}(D)\int_{D}|f|^{2}\,dx,
\]
for any origin-symmetric convex body $D\subset \R^{n}$, does not hold in
general. It is enough to consider the non-negative function \eqref{h*}
\[
f=|B_{r}|^{-1}\chi_{B_{r}}*\chi_{B_{r}}
\]
for sufficiently large $r$. Indeed, for $f(x)\le f(0)=1$, $\supp f\subset
B_{2r}$, we have
\[
\int_{\R^{n}}|f|^{2}\,dx=\int_{B_{2r}}|f|^{2}\,dx\ge
|B_{2r}|^{-1}\biggl(\int_{B_{2r}}f\,dx\biggr)^{2}.
\]
Moreover, similarly to \eqref{aR-chi},
\[
\int_{B_{2r}}f\,dx=\wh{f}(0)=|B_{r}|^{-1}\wh{\chi}_{B_{r}}^{2}(0)=
|B_{r}|.
\]
Then
\[
\int_{\R^{n}}|f|^{2}\,dx\ge 2^{-n}|B_{r}|\to \infty,\quad r\to \infty.
\]

\begin{thm}\label{thm-R}
Let $f\in L^{1}(\R^{n})$ be a positive definite function. Then for $D\subset \T^{n}$ we have
\begin{equation}\label{aR-Z}
\int_{\R^{n}}|f|^{2}\,dx\le (a_{\R^{n}}(D))^{-1}\int_{D+\Z^{n}}|f|^{2}\,dx.
\end{equation}
\end{thm}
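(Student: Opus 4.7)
My plan is to reduce \eqref{aR-Z} to the periodic setting by periodizing both the squared integrand and a spatial Tur\'an--admissible test function, and then to mimic the Fourier-side argument already used in the proof of Theorem~\ref{th-turan}.

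First I take any admissible $h$ for the spatial Tur\'an problem $a_{\R^n}(D)$, so $h\in L^1(\R^n)$, $\widehat h\ge 0$, $\supp h\subset D$, $h(0)=1$, and form its periodization $H(x)=\sum_{\nu\in \Z^n}h(x+\nu)$. As in the proofs of Theorems~\ref{th-turan} and \ref{delll}, $H$ is a positive definite function on $\T^n$ with $\widehat H_\mu=\widehat h(\mu)\ge 0$, $\supp H\subset D+\Z^n$, and $H(x)\le H(0)=h(0)=1$ on $\T^n$. In parallel, I periodize the squared integrand by $F(x)=\sum_{\nu\in \Z^n}|f(x+\nu)|^2$; this lies in $L^1(\T^n)$ whenever $\int_{\R^n}|f|^2<\infty$ (otherwise \eqref{aR-Z} is trivial provided the right-hand side is finite, which is what needs to be ruled out, see below). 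Since $\widehat f\ge 0$ is real and $\mathcal F[\overline f](\xi)=\widehat f(-\xi)$, the convolution formula gives
\[
\widehat F_\mu=\widehat{|f|^2}(\mu)=\int_{\R^n}\widehat f(\eta)\,\widehat f(\eta-\mu)\,d\eta\ge 0,
\]
so $F\in L_+^1(\T^n)$.

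The final step uses two observations. First, because $D\subset \T^n$ is an origin-symmetric convex body, the translates $D+\nu$, $\nu\in \Z^n$, are pairwise disjoint, whence $\int_D F\,dx=\int_{D+\Z^n}|f|^2\,dx$ and $\int_{\T^n}F\,dx=\int_{\R^n}|f|^2\,dx$. Second, using $0\le H\le 1$ on $\T^n$, $\supp H\subset D$ when viewed on $\T^n$, and Parseval applied to $FH$ with all Fourier coefficients nonnegative (and $H$ real and even), I obtain the chain
\[
\int_{D+\Z^n}|f|^2\,dx=\int_D F\,dx\ge \int_{\T^n}FH\,dx=\sum_{\nu\in \Z^n}\widehat F_\nu\widehat H_\nu\ge \widehat F_0\widehat H_0=\widehat h(0)\int_{\R^n}|f|^2\,dx.
\]
Taking the supremum over admissible $h$, i.e.\ the infimum of $\widehat h(0)^{-1}$, then yields \eqref{aR-Z}.

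The main obstacle I anticipate is technical rather than conceptual: to justify Parseval and the convolution formula for $\widehat{|f|^2}$ one really needs $f\in L^2(\R^n)$ (so that $F\in L^1\cap L^2(\T^n)$ and $\widehat F$ is well defined), which is more than the bare hypothesis $f\in L^1$ with $\widehat f\ge 0$. The standard fix, mirroring the proof of Proposition~\ref{main0}, is to replace $f$ by the regularization $f*\varphi_\varepsilon$, where $\varphi_\varepsilon$ is a smooth, compactly supported, positive definite mollifier with $\widehat{\varphi_\varepsilon}(0)=1$ and $\widehat{\varphi_\varepsilon}\to 1$ pointwise; apply the inequality to this $L^1\cap L^\infty$ regularization; and then pass to the limit $\varepsilon\to 0$ by Fatou's lemma on the left and dominated convergence on the right. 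All remaining steps are purely algebraic manipulations with nonnegative Fourier coefficients, exactly parallel to the proof of Theorem~\ref{th-turan}.
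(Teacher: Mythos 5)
Your core argument is correct, but it takes a genuinely different route from the paper. The paper stays on the Fourier--transform side of $\R^{n}$: it builds the auxiliary function $u(x)=\sum_{\nu}\wh{g}_{\nu}h(x-\nu)$, where $g$ is a periodic positive definite function concentrated near the origin, bounds $I=\int_{\R^{n}}|f|^{2}u\,dx$ from above by $\int_{D+\Z^{n}}|f|^{2}\,dx$ and from below by $\int\wh{h}(\eta)g(\eta)F_{r}(\eta)\,d\eta$ with $F_{r}(\eta)=\int_{B_{r}}\wh{f}(\xi)\wh{f}(\xi-\eta)\,d\xi$, and then performs two limits ($\varepsilon\to 0$ to concentrate $g$ at the origin, then $r\to\infty$). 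You instead periodize: the observation that $F=\sum_{\nu}|f(\cdot+\nu)|^{2}$ has Fourier coefficients $\wh{F}_{\mu}=\int_{\R^{n}}\wh{f}(\eta)\wh{f}(\eta-\mu)\,d\eta\ge 0$ lets you feed $F$ straight into the torus mechanism of Theorem~\ref{th-turan}, with the periodization $H$ of $h$ as the test function. This is cleaner --- a single Parseval computation, legitimate because $\wh{H}_{\mu}=\wh{h}(\mu)$ with $\sum_{\mu}\wh{h}(\mu)=h(0)=1$ by Poisson summation, so $H$ has an absolutely convergent Fourier series --- and it makes transparent why $D+\Z^{n}$ rather than $D$ appears on the right-hand side. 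One small correction: you assert $0\le H\le 1$, but an admissible $h$ (hence $H$) need not be non-negative; fortunately your chain only uses $H\le H(0)=1$ on $D$ together with $F\ge 0$, so nothing is lost.

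The only genuine gap is the reduction to $f\in L^{2}(\R^{n})$. ``Dominated convergence on the right'' is not available as stated: there is no dominating function for $|f*\varphi_{\varepsilon}|^{2}$ on $D+\Z^{n}$ if $f\notin L^{2}_{\mathrm{loc}}$. The repair is the Jensen-type bound already used in the proof of Proposition~\ref{main0}: since $\varphi_{\varepsilon}\ge 0$, $\int\varphi_{\varepsilon}=1$ and $\supp\varphi_{\varepsilon}\subset B_{\varepsilon}$, one has
\[
\int_{D+\Z^{n}}|f*\varphi_{\varepsilon}|^{2}\,dx\le
\int_{D+\Z^{n}}\bigl(|f|^{2}*\varphi_{\varepsilon}\bigr)\,dx\le
\int_{D+B_{\varepsilon}+\Z^{n}}|f|^{2}\,dx,
\]
after which Fatou on the left and a slight inner dilation of $D$ (using $a_{\R^{n}}(\lambda D)=\lambda^{n}a_{\R^{n}}(D)$, $\lambda\to 1-$) close the argument. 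You should not feel singled out here: the paper's own proof quietly assumes the same regularity when it asserts $fh\in L^{2}(\R^{n})$ for a general positive definite $f\in L^{1}(\R^{n})$.
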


Theorems \ref{th-turan} and \ref{thm-R} immediately imply
\begin{cor}[Hlawka's inequality in $\R^{n}$]\label{thm-R++}
Under conditions of Theorem~\ref{thm-R}, we have
\[
\int_{\R^{n}}|f|^{2}\,dx\le \bigl|\tfrac{1}{2}D\bigr|^{-1}\int_{D+\Z^{n}}|f|^{2}\,dx.
\]
\end{cor}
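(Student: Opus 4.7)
The plan is to obtain the corollary as a one-line chaining of Theorem \ref{thm-R} with the rightmost inequality of the chain displayed in Theorem \ref{th-turan}. Concretely, I would first apply Theorem \ref{thm-R} to the given positive definite $f\in L^{1}(\R^{n})$ and origin-symmetric convex body $D\subset \T^{n}$ to obtain
\[
\int_{\R^{n}}|f|^{2}\,dx\le (a_{\R^{n}}(D))^{-1}\int_{D+\Z^{n}}|f|^{2}\,dx.
\]
It then only remains to bound the constant $(a_{\R^{n}}(D))^{-1}$ from above by $\bigl|\tfrac{1}{2}D\bigr|^{-1}$.

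That upper bound is the last link in the chain stated in Theorem \ref{th-turan}, namely $(a_{\R^{n}}(D))^{-1}\le \bigl|\tfrac{1}{2}D\bigr|^{-1}$, or equivalently $a_{\R^{n}}(D)\ge \bigl|\tfrac{1}{2}D\bigr|$. In spirit this is Siegel's observation: the autoconvolution $h_{*}=|K|^{-1}\chi_{K}*\chi_{K}$ with $K=\tfrac{1}{2}D$ is supported in $K+K=D$ (using symmetry and convexity of $D$), has nonnegative Fourier transform $|K|^{-1}\wh{\chi}_{K}^{\,2}$, and satisfies $h_{*}(0)=1$, so $\wh{h}_{*}(0)=|K|$ is an admissible test value for the spatial Tur\'an problem. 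Since this lower bound on $a_{\R^{n}}(D)$ has already been established inside Theorem \ref{th-turan} (see \eqref{aR-chi}), I would simply invoke it rather than reprove it.

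Combining the two estimates in order yields
\[
\int_{\R^{n}}|f|^{2}\,dx\le (a_{\R^{n}}(D))^{-1}\int_{D+\Z^{n}}|f|^{2}\,dx\le \bigl|\tfrac{1}{2}D\bigr|^{-1}\int_{D+\Z^{n}}|f|^{2}\,dx,
\]
which is precisely the claimed inequality. There is no genuine obstacle at the level of the corollary itself: both ingredients are already in hand, and the direction $(a_{\R^{n}}(D))^{-1}\le \bigl|\tfrac{1}{2}D\bigr|^{-1}$ is exactly what is needed to preserve the sense of Wiener's inequality when relaxing the sharp constant. The substantive work has already been done in Theorem \ref{thm-R}, where the periodic argument is lifted to $\R^{n}$ by periodizing a Delsarte-admissible function, and in the Siegel-type lower bound on $a_{\R^{n}}(D)$; once these are available, the corollary is immediate.
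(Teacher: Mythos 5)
Your proposal is correct and is exactly the paper's own argument: the authors prove the corollary by noting that Theorem~\ref{thm-R} gives the bound with constant $(a_{\R^{n}}(D))^{-1}$, and the last inequality in the chain of Theorem~\ref{th-turan}, established via the Siegel-type function $h_{*}$ in \eqref{aR-chi}, gives $(a_{\R^{n}}(D))^{-1}\le \bigl|\tfrac{1}{2}D\bigr|^{-1}$. Nothing further is needed.
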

This result is new even in the one-dimensional case; a~weaker estimate was
proved in \cite[Th.~3.3]{kaonta94}.
\begin{rem}\label{thm-R+++}
Theorem~\ref{thm-R} and Corollary~\ref{thm-R++} hold for positive definite functions defined in the usual way:
 $f\in C(\R^{n})$ and
 \begin{equation}\label{f-mu} f(x)=f(0)\int_{\R^{n}}e(x\xi)\,d\mu(\xi),\quad f(0)\ge 0,\end{equation}
 where $\mu$
 is a finite positive measure on $R$, see \cite[Chap.~1]{ru62}.
\end{rem}

\begin{proof}[Proof of Theorem \ref{thm-R}]Let
$f\in L^{1}(\R^{n})$ and $\wh{f}\ge 0$. Suppose that $h\in L^{1}(\R^{n})$ is an
admissible function in the spatial Tur\'an problem, that is, $\wh{h}\ge 0$,
$\supp h\subset D$, and $h(0)=1$. Then $h(x)\le h(0)=1$ for $x\in \R^{n}$.

Let $g\in L^{1}(\T^{n})$ be a non-negative periodic positive definite function
satisfying $\wh{g}_{0}=1$ and $\supp g\subset B_{\varepsilon}$ for some small
positive $\varepsilon$. For example, we can take the
periodization of function
\eqref{h*}.
In this case, $$0\le \wh{g}_{\nu}\le \wh{g}_{0}=1,\qquad \nu\in \Z^{n}.$$
Now we set
\[
u(x)=\sum_{\nu\in \Z^{n}}\wh{g}_{\nu}h(x-\nu),\quad x\in \R^{n}.
\]
Then
\[
\wh{u}(\xi)=\wh{h}(\xi)\sum_{\nu\in \Z^{n}}\wh{g}_{\nu}e(\nu\xi)=
\wh{h}(\xi)g(\xi)\ge 0,\quad \xi\in \R^{n}.
\]
Let us estimate $I:=\int_{\R^{n}}|f|^{2}u\,dx$. First, we have that
\[
I=\sum_{\nu\in \Z^{n}}\wh{g}_{\nu}\int_{D+\nu}|f(x)|^{2}h(x-\nu)\,dx\le
\sum_{\nu\in \Z^{n}}\int_{D+\nu}|f|^{2}\,dx=\int_{D+\Z^{n}}|f|^{2}\,dx.
\]
On the other hand, we have that $fh\in L^{1}(\R^{n})\cap L^{2}(\R^{n})$,
$\wh{fh}=\wh{f}*\wh{u}\ge 0$, $\wh{f}\ge 0$, and
\begin{align*}
I&=\int_{\R^{n}}\ol{f(x)}\int_{\R^{n}}(\wh{fh})(\xi)e(\xi x)\,d\xi\,dx=
\int_{\R^{n}}(\wh{fh})(\xi)\ol{\wh{f}(\xi)}\,d\xi\\
&\ge \int_{B_{r}}(\wh{fh})(\xi)\wh{f}(\xi)\,d\xi,
\end{align*}
where $r>0$ is sufficiently large. Now we represent the latter integral as follows
\[
\int_{B_{r}}\wh{f}(\xi)\int_{\R^{n}}\wh{f}(\xi-\eta)\wh{u}(\eta)\,d\eta\,d\xi=
\int_{\R^{n}}\wh{u}(\eta)F_{r}(\eta)\,d\eta=
\int_{\R^{n}}\wh{h}(\eta)g(\eta)F_{r}(\eta)\,d\eta,
\]
where $$F_{r}(\eta)=\int_{B_{r}}\wh{f}(\xi)\wh{f}(\xi-\eta)\,d\xi.$$

Note that the functions $F_{r}$ and $\wh{h}$ are continuous at the
origin, and moreover,
\[
\int_{B_{\varepsilon}}g\,d\xi=\wh{g}_{0}=1
\]
and $g\ge 0$. Therefore, letting $\varepsilon\to 0$, we get by
 the second mean-value theorem that
\[
I\ge \wh{h}(0)F_{r}(0)=\wh{h}(0)\int_{B_{r}}(\wh{f}(\xi))^{2}\,d\xi=
\wh{h}(0)\|\wh{f}\|_{L^{2}(B_{r})}^{2}.
\]
Let us now let $r\to \infty$. We have
\[
I\ge
\wh{h}(0)\|\wh{f}\|_{L^{2}(\R^{n})}^{2}=\wh{h}(0)\|f\|_{L^{2}(\R^{n})}^{2}.
\]
Thus,
\[
\int_{\R^{n}}|f|^{2}\,dx\le (\wh{h}(0))^{-1}\int_{D+\Z^{n}}|f|^{2}\,dx.
\]
Maximizing $\wh{h}(0)$, we obtain \eqref{aR-Z}.\end{proof}

\begin{proof}[Proof of Remark \ref{thm-R+++}]

It is enough to consider the function
\[
f_{r}(x)=f(x)h_{*}(r^{-1}x),\quad x\in \R^{n},\quad r>0,
\]
where $h_{*}=|B_{1}|^{-1}\chi_{B_{1}}*\chi_{B_{1}}$, cf.~\eqref{h*}. Then
$f_{r}\in L^{1}(\R^{n})$ is a positive definite function with compact support.
Therefore, inequality \eqref{aR-Z} holds for such $f_{r}$. Since
$h_{*}(r^{-1}x)\to h_{*}(0)=1$ as $r\to \infty$ uniformly on any compact subset
of $\R^{n}$, we arrive at inequality \eqref{aR-Z} for $f$.
\end{proof}

\section{Final remarks}

1.~In $L^p(\T^{n})$, Wiener's theorem states that for $f\in L_{+}^{1}(\T^{n})$
one has
\[
\int_{\T^{n}}|f|^{p}\,dx\le C_{n,p}(D)\int_{D}|f|^{p}\,dx,
\]
if and only if $p$ is an even number; see \cite{wa69,sh75} for $n=1$. Here
$D\subset \T^{n}$ is an origin-symmetric convex body.

Similarly to Wiener's problem for $p=2$, we state the following question: to find
\[
W_{n,p}(D):=\sup_{f\in L_{+}^{1}(\T^{n})\setminus
\{0\}}\frac{\int_{\T^{n}}|f|^{p}\,dx }{{|D|^{-1}}\int_{D}|f|^{p}\,dx},
\]
where $p$ is an even integer.

First, we note that
\[
1\le W_{n,p}(D)\le W_{n, 2}(D)\le 2^{n},\qquad p=2k, \quad k\in \N.
\]
This follows immediately from the fact that $f^k$ is the positive definite
function. Thus, to estimate $W_{n,p}(D)$ from above, one can apply any upper
estimate of $W_{n, 2}(D)$, for example, given by Theorems \ref{main1} and
\ref{main2}.

Second, the proof of Theorem \ref{main3} can be easily modified to the case
$p>0$. This is because of the fact that crucial relations \eqref{f2-sum},
\eqref{f2-int-T}, and \eqref{f2-int-D} can be written as follows:
\begin{align*}
&f^{p}(x)=|\Gamma|^{-p}\sum_{\gamma\in
\Gamma}\psi_{\varepsilon}^{p}(x-\gamma),\\
&\int_{\T^{n}}f^{p}\,dx=|\Gamma|^{-p+1}\|\psi_{\varepsilon}\|_{L^{p}(B_{\varepsilon})}^{p},
\\
&
\int_{D}f^{p}\,dx=|\Gamma|^{-p}\|\psi_{\varepsilon}\|_{L^{p}(B_{\varepsilon})}^{p}.
\end{align*}
This immediately gives
\begin{thma}
Let $D\subset \delta I^{n}$ for $\delta\in (0,1/q)$, $q=2,3,\dots$. Then
\[
|D|q^{n}\le W_{n,p}(D).
\]
\end{thma}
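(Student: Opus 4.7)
The plan is to recycle the extremal test function from the proof of Theorem \ref{main3} and replace the $L^2$ computations by $L^p$ ones, essentially verbatim. I would keep $\Gamma=\{\nu/q\colon \nu\in \Z_q^n\}$ with $|\Gamma|=q^n$, pick $\varepsilon>0$ with $\varepsilon<\min\{\delta,1/q-\delta\}$, take $\psi_\varepsilon$ as in \eqref{psi-eps}, and define
\[
f(x)=|\Gamma|^{-1}\sum_{\gamma\in \Gamma}\psi_{\varepsilon}(x-\gamma),
\]
exactly as in \eqref{fG-def}. As established in the proof of Theorem \ref{main3}, $f$ is non-negative and lies in $L_+^1(\T^n)$, so it is admissible in the supremum defining $W_{n,p}(D)$.

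The key geometric facts, already verified in the course of proving Theorem \ref{main3}, are that the supports $B_\varepsilon+\gamma$, $\gamma\in \Gamma$, are pairwise disjoint subsets of $\T^n$ and that $(B_\varepsilon+\Gamma)\cap D=B_\varepsilon$. Disjointness of these supports upgrades \eqref{f2-sum} to the pointwise identity
\[
f^p(x)=|\Gamma|^{-p}\sum_{\gamma\in \Gamma}\psi_\varepsilon^p(x-\gamma),\qquad x\in \T^n,
\]
since at each point at most one summand in the definition of $f$ is nonzero, and so raising to the $p$-th power introduces no cross terms. Integrating over $\T^n$ and shifting each summand by periodicity back to $B_\varepsilon$ gives
\[
\int_{\T^n}f^p\,dx=|\Gamma|^{-p+1}\|\psi_\varepsilon\|_{L^p(B_\varepsilon)}^p,
\]
whereas the identity $(B_\varepsilon+\Gamma)\cap D=B_\varepsilon$ collapses the integral over $D$ to a single term:
\[
\int_{D}f^p\,dx=|\Gamma|^{-p}\|\psi_\varepsilon\|_{L^p(B_\varepsilon)}^p.
\]

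Taking the ratio, the factors $\|\psi_\varepsilon\|_{L^p(B_\varepsilon)}^p$ cancel and I obtain
\[
\frac{\int_{\T^n}|f|^p\,dx}{|D|^{-1}\int_D|f|^p\,dx}=|D|\,|\Gamma|=|D|\,q^n,
\]
which is independent of $\varepsilon$ and yields the desired lower bound $W_{n,p}(D)\ge |D|q^n$. I do not expect any genuine obstacle: beyond the bookkeeping, the only substantive point is the pointwise identity for $f^p$, which is immediate once the translated balls are known to be disjoint. In particular, the hypothesis that $p$ is an even integer enters only through the definition of $W_{n,p}$ via the class $L_+^1(\T^n)$; the estimate itself is valid for every $p>0$.
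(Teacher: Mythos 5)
Your proposal is correct and follows exactly the route the paper takes: it reuses the test function \eqref{fG-def} from the proof of Theorem~\ref{main3} and replaces the relations \eqref{f2-sum}, \eqref{f2-int-T}, \eqref{f2-int-D} by their $L^{p}$ analogues, which hold pointwise because the translated supports $B_{\varepsilon}+\gamma$ are pairwise disjoint. Your closing observation that the argument works for all $p>0$ also matches the paper's remark.
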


2.~In \cite{astitu10}, it was proved that if $L^{p}(\T)\subset X$ and $X$ is a
solid, then
\[
L_{\mathrm{loc}+}^{p}(\T):=\biggl\{f\in L_{+}^{1}(\T)\colon
\int_{-\delta}^{\delta}|f|^p\,dx<\infty\biggr\}\subset X.
\]
Recall that a space of functions $X$ is called solid if it satisfies the
following property: For every $f=\sum c_{\nu}e(\nu x)$ in $X$, if another
function $g=\sum d_{\nu}e(\nu x)$ satisfies $|d_{\nu}|\le c_{\nu}$ for every
$\nu$ then $g$ is also in $X$. In particular, if $1<p\le 2$, then
\[
L_{\mathrm{loc}+}^{p}(\T^{n})\subset l^{p'}(\T^{n}).
\]
Therefore, one seeks the optimal constant
\[
\cW_{n,p}(D):= \sup_{f\in L_{\mathrm{loc}+}^{p}(\T^{n})\setminus
\{0\}}\frac{\bigl(\sum_{\nu}\wh{f}_{\nu}^{p'}\bigr)^{1/p'}}
{\bigl(|D|^{-1}\int_{D}|f|^{p}\,dx\bigr)^{1/p}},
\]
where $1<p\le 2$.

\smallbreak
3.~N.~Wiener in the 1930's \cite{wi34} was the first to study problem
\eqref{wiener-f} for trigonometric series with lacunary coefficients.
Later on,
this interesting problem received much attention by many authors including
A.~E.~Ingham \cite{in36},
\cite[Art.~45, Sec.~20, p.~224, (20.38)]{se91}, J.~D.~Vaaler
\cite{va85}, A.~Bonami and Sz.~R\'ev\'esz \cite{bore08} and, A.~Babenko and
V.~Yudin \cite{bayu15}.

\smallbreak
4.~In the case of $p=\infty$, Wiener's theorem becomes a well-known theorem of
Paley \cite{pa75}: if $f\in L_{\mathrm{loc}+}^{p}$ and $f$ is an even function,
then $f$ is continuous on $\T$ and its Fourier series converges uniformly and
absolutely. In this case Wiener's problem is closely related to a well-known
pointwise Tur\'an problem studied in \cite{arbebe03,kore06b,iviv10}.

The authors are grateful to V.I. Ivanov and the participants of the seminar on Approximation Theory at the Steklov Mathematical Institute
for useful comments.


\begin{thebibliography}{99}

\bibitem[AB1]{arbe01}
V.~V.~Arestov, E.~E.~Berdysheva, \textit{Turan's Problem for Positive Definite
Functions with Supports in a Hexagon}, Proc. Steklov Inst. Math. Suppl. \textbf{1},
(2001), 20--29.

\bibitem[AB2]{arbe02}
V.~V.~Arestov, E.~E. Berdysheva, \textit{The Tur\'an problem for a class of
polytopes}, East J. Approx. \textbf{8} (2002), no.~3, 381--388.

\bibitem[ABB]{arbebe03}
V.~V.~Arestov, E.~E.~Berdysheva, H.~Berens, On pointwise Turan's problem for
positive definite functions, East J.~Approx \textbf{9} (2003), no.~1, 31--42.

\bibitem[ATT]{astitu10}
M.~Ash, S.~Tikhonov, J.~Tung, \textit{Wiener's positive Fourier coefficients
Theorem in variants of $L^{p}$ spaces}, Michigan Math.~J. \textbf{59} (2010),
no.~1, 143--152.

\bibitem[BY]{bayu15}
A.~G.~Babenko, V.~A.~Yudin, \textit{Estimates for mean-square norms of
functions with lacunary Fourier series} (in Russian), Trudy Inst. Mat. Mekh.
UrO RAN \textbf{21} (2015), no.~4, 54--66.

\bibitem[BK]{bike14}
G.~Bianchi, M.~Kelly, \textit{The Blaschke--Santalo Inequality},
arXiv:1312.0244, 2014.

\bibitem[BR]{bore08}
A.~Bonami, Sz.~R\'ev\'esz, \textit{Failure of Wiener's property for positive
definite periodic functions}, C.~R.~Acad. Sci. Paris, Ser.~I, \textbf{346}
(2008), 39--44.

\bibitem[Co]{co02}
H.~Cohn, \textit{New upper bounds on sphere packings. II}, Geom. Topol.
\textbf{6} (2002), 329--353.

\bibitem[CE]{coel03}
H.~Cohn, N.~Elkies, \textit{New upper bounds on sphere packings. I}, Ann. of
Math. (2) \textbf{157} (2003), no.~2, 689--714.

\bibitem[CKMRV]{mar2}
H.~Cohn, A.~Kumar, S.~Miller, D.~Radchenko, M.~Viazovska, \textit{The sphere
packing problem in dimension $24$}, ArXiv:1603.06518.


\bibitem[CZ]{cozh14}
H.~Cohn, Y.~Zhao, \textit{Sphere packing bounds via spherical codes}, Duke
Math. J. \textbf{163} (2014), no.~10, 1965--2002.

\bibitem[Ed]{ed79}
R.~E.~Edwards, Fourier Series: A Modern Introduction, Vol.~1, Springer, 1979.

\bibitem[EGR]{ehgnri04}
W.~Ehm, T.~Gneiting, D.~Richards, \textit{Convolution roots of radial positive
definite functions with compact support}, Trans. Amer. Math. Soc., \textbf{356}
(2004), 4655--4685.

\bibitem[Go1]{go00}
D.~V.~Gorbachev, \textit{Extremal problem for entire functions of exponential
spherical type, connected with the Levenshtein bound on the sphere packing
density in $\R^{n}$} (in Russian), Izvestiya of the Tula State University Ser.
Mathematics Mechanics Informatics, \textbf{6} (2000), 71--78.

\bibitem[Go2]{go01}
D.~V.~Gorbachev, \textit{An extremal problem for periodic functions with support
in a ball}, Math. Notes \textbf{69} (2001), no.~3--4, 313--319.

\bibitem[GM]{goma04}
D.~V.~Gorbachev, A.~S.~Manoshina, \textit{Tur\'an extremal problem for periodic
functions with small support and its applications}, Math. Notes \textbf{76}
(2004), no.~5, 640--652.

\bibitem[Hl]{hl81}
E.~Hlawka, \textit{Anwendung einer Zahlengeometrischen Methode von C.~L.~Siegel
auf Probleme der Analysis}, Comment Math. Helvetici \textbf{56} (1981), 66--82.

\bibitem[In]{in36}
A.~E.~Ingham, \textit{Some trigonometrical inequalities with applications to
the theory of series}, Math.~Z., \textbf{41} (1936), no.~1, 367--379.

\bibitem[Iv]{iv06}
V.~I.~Ivanov, \textit{ On the Tur\'an and Delsarte problems for periodic
positive definite functions}, Mathematical Notes, \textbf{80}(2006), 870--875.

\bibitem[IGR]{ivgoru05}
V.~I.~Ivanov, D.~V.~Gorbachev, Yu.~D.~Rudomazina, \textit{Some extremal
problems for periodic functions with conditions on their values and Fourier
coefficients}, Proc. Steklov Inst. Math., Function Theory, suppl. \textbf{2}
(2005), 139--159.

\bibitem[II]{iviv10}
V.~I.~Ivanov, A.~V.~Ivanov, \textit{Tur\'an problems for periodic positive
definite functions}, Ann. Univ. Sci. Budapest. Sect. Comput. \textbf{33}
(2010), 219--237.

\bibitem[IR]{ivru05}
V.~I.~Ivanov, Yu.~D.~Rudomazina, \textit{On the Tur\'an problem for periodic
functions with nonnegative Fourier coefficients and small support},
Mathematical Notes, \textbf{77} (2005), 870--875.

\bibitem[KL]{kale78}
G.~A.~Kabatiansky, V.~I.~Levenshtein, \textit{Bounds for packings on a sphere
and in space}, Problems of Information Transmission \textbf{14} (1978), 1--17.

\bibitem[KOT]{kaonta94}
T.~Kawazoe, Y.~Onoe, and K.~Tachizawa, \textit{Functions on the real line with
nonnegative Fourier transforms}, Tohoku Math.~J. \textbf{46} (1994), no.~3,
311--320.

\bibitem[KR1]{kore03}
M.~Kolountzakis, Sz.~R\'ev\'esz, \textit{On a problem of Tur\'an about
positive definite functions}, Proc. Amer. Math. Soc. \textbf{131} (2003),
no.~11, 3423--3430.

\bibitem[KR2]{kore06a}
M.~Kolountzakis, Sz.~R\'ev\'esz, \textit{Tur\'an's extremal problem for
positive definite functions on groups}, J.~London Math. Soc. (2) \textbf{74}
(2006), no.~2, 475--496.

\bibitem[KR3]{kore06b}
M.~Kolountzakis, Sz.~R\'ev\'esz, \textit{On pointwise estimates of positive
definite functions with given support}, Canad. J.~Math. \textbf{58} (2006),
401--418.

\bibitem[Le]{le79}
V.~I.~Levenshtein, \textit{Bounds for packings in $n$-dimensional Euclidean
space}, Soviet Math. Dokl. \textbf{20} (1979), 417--421.

\bibitem[Pa]{pa75}
R.~E.~A.~C.~Paley, \textit{On Fourier series with positive coefficients},
J.~London Math. Soc. \textbf{7} (1932), 205--208.

\bibitem[Ru]{ru62}
W.~Rudin, \textit{Fourier analysis on groups}, Interscience Publ., New York,
1962.

\bibitem[Se]{se91}
A.~Selberg, Collected papers, Volume~II, Springer-Verlag, Berlin, 1991.

\bibitem[Sh]{sh75}
H.~S.~Shapiro, \textit{Majorant problems for Fourier coefficients}, Quart.
J.~Math. Oxford Ser.~(2) \textbf{26} (1975), 9--18.

\bibitem[Si]{si35}
C.~L.~Siegel, \textit{\"Uber Gitterpunkte in convexen K\"orpern and ein damit
zusammenh\"angendes Extremalproblem}, Acta Math. \textbf{65} (1935), 307--323.


\bibitem[St]{ste}
S. B. Stechkin, \textit{ One extremal problem for trigonometric series with nonnegative coefficients}, Acta
Math. Acad. Scient. Hungar.,\textbf{23} (1972), 289--291.

\bibitem[SW]{stwe71}
E.~M.~Stein and G.~Weiss, {\it Introduction to Fourier analysis on
Euclidean spaces}, Princeton, N.~J., 1971.

\bibitem[Va]{va85}
J.~D.~Vaaler, \textit{Some extremal functions in Fourier analysis}, Bull. Amer.
Math. Soc. (New Series), \textbf{12} (1985), no.~2, 183--216.

\bibitem[Ve]{ve13}
A.~Venkatesh, \textit{A note on sphere packings in high dimension},
Int. Math. Res. Notices (2013), no.~7, 1628--1642.

\bibitem[Vi]{mar1}
M.~Viazovska, \textit{The sphere packing problem in dimension $8$},
ArXiv:1603.04246.

\bibitem[Wa]{wa69}
S.~Wainger, \textit{A problem of Wiener and the failure of a principle for
Fourier series with positive coefficients}, Proc. Amer. Math. Soc. \textbf{20}
(1969), 16--18.

\bibitem[Wi]{wi34}
N.~Wiener, \textit{A~class of gap theorems}, Annali della Scuola Normale
Superiore di Pisa, Classe di Scienze, \textbf{3} (1934), no.~3--4, 367--372.

\bibitem[Zy]{zy02}
A.~Zygmund, \textit{Trigonometric Series}, Vol.~I, Cambridge University Press,
2002.

\end{thebibliography}
\end{document}